\documentclass[a4paper]{article}
\usepackage{amsthm,amssymb,amsmath,enumerate}
\usepackage{algorithm}

\usepackage{cite}

\usepackage{tikz}
\usetikzlibrary{calc}
\usetikzlibrary{backgrounds}

\colorlet{hellgrau}{black!20!white}
\colorlet{dunkelgrau}{black!60!white}

\colorlet{grau}{black!40!white}
\colorlet{bold}{black} 
\tikzstyle{ledge}=[thick, grau]
\tikzstyle{rededge}=[very thick, bold]
\tikzstyle{lvertex}=[thick,circle,inner sep=0.cm, minimum size=2mm, fill=white, draw=grau]
\tikzstyle{redvx}=[thick,circle,inner sep=0.cm, minimum size=2mm, fill=white, draw=bold]

\tikzstyle{hvertex}=[thick,circle,inner sep=0.cm, minimum size=2mm, fill=white, draw=black]
\tikzstyle{hedge}=[very thick]
\tikzstyle{medge}=[thick]
\tikzstyle{harrow}=[thick,arrows=->]
\tikzstyle{darrow}=[thick,arrows=<-]
\tikzstyle{point}=[draw,circle,inner sep=0.cm, minimum size=1mm, fill=black]
\tikzstyle{pointer}=[thick,->,shorten >=2pt,color=dunkelgrau]
\tikzstyle{facebdry}=[color=auchblau, very thick] 
\tikzstyle{face}=[facebdry,fill=hellblau]
\tikzstyle{nface}=[color=hellblau,fill=hellblau,thick] 
\tikzset{>={latex}}
\tikzstyle{tinyvx}=[thick,circle,inner sep=0.cm, minimum size=1.3mm, fill=white, draw=black]
\tikzstyle{smallvx}=[hvertex,minimum size=1.7mm]

\pgfdeclarelayer{background}
\pgfdeclarelayer{foreground}
\pgfsetlayers{background, main, foreground}

\newtheorem*{lemma*linkageWalls}{Lemma~\ref{wall:lem:linkageMainLemma}}

\newcommand{\comment}[1]{}
\newcommand{\N}{\mathbb N}

\title{On the edge-Erd\H{o}s-P\'{o}sa property of walls}
\author{Henning Bruhn and Raphael Steck}
\date{\today}


\usepackage{hyperref}
\hypersetup{
pdftitle={On the edge-Erdös-P\'{o}sa property of walls}, 
pdfauthor={Raphael Steck},
pdfsubject={On the edge-Erdös-P\'{o}sa property of walls},
pdfproducer={pdfeTex 3.14159-1.30.6-2.2},
colorlinks=false,
pdfborder=0 0 0	
}

\usepackage{enumitem}

\theoremstyle{plain}
\newtheorem{theo}{Theorem}
\newtheorem{lemma}[theo]{Lemma}
\newtheorem{prop}[theo]{Proposition}
\newtheorem{defin}[theo]{Definition}

\begin{document}

\maketitle





\begin{abstract}
We show that walls of size at least $6 \times 4$ do not have the edge-Erd\H{o}s-P\'{o}sa property.
\end{abstract}

\section{Introduction}

The Erd\H{o}s-P\'{o}sa property provides a duality between packing and covering in graphs. We say that a class $\mathcal{F}$ has the \emph{edge-Erd\H{o}s-P\'{o}sa property} if there exists a function $f: \N \rightarrow \mathbb{R}$ such that for every graph $G$ and every integer $k$, there are $k$ edge-disjoint subgraphs of $G$ each isomorphic to some graph in $\mathcal{F}$ or there is an edge set $X \subseteq E(G)$ of size at most $f(k)$ meeting all subgraphs of $G$ isomorphic to some graph in $\mathcal{F}$.
In this article, we focus on graph classes that arise from taking minors. For a graph $H$, we define the set of \emph{$H$-expansions} as
\(
\mathcal{F}_H = \{ G \, | \, H \text{ is a minor of } G\}.
\)

While for all non-planar graphs $H$, $\mathcal{F}_H$ does not have the edge-Erd\H{o}s-P\'{o}sa property (see for example \cite{raymond17}), there are only some very simple planar graphs for which $\mathcal{F}_H$ is known to have the edge-Erd\H{o}s-P\'{o}sa property such as long cycles\cite{BHJ19} or $K_4$\cite{BH18}.
For most planar graphs, it is open whether they have the edge-Erd\H{o}s-P\'{o}sa property or not.

In this article, we show that 

\begin{theo} \label{wall:thm:WallsHaveNotEdgeErdosPosaIf6x4Contained}
For every wall $B$ of size at least $6 \times 4$, the class of $B$-expansions does not have the edge-Erd\H{o}s-P\'{o}sa property.
\end{theo}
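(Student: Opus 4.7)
My plan is to construct, for every integer $k \geq 1$, a graph $G_k$ containing no two edge-disjoint $B$-expansions but requiring at least $k$ edges to hit every $B$-expansion; this directly refutes the edge-Erd\H{o}s-P\'osa property for $\mathcal{F}_B$. To build $G_k$, I would take a wall of height at least $4$ (matching the height of $B$) and length $N=N(k)$ to be chosen large, and identify its leftmost and rightmost columns, possibly with a half-twist. The resulting cylindrical (or M\"obius-like) wall contains $B$ as a minor in roughly $N$ different rotated ways, since any sufficiently long consecutive block of columns hosts a copy of $B$.

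For the covering lower bound, observe that each edge of $G_k$ lies in only $O(1)$ of these $N$ rotated $B$-expansions, where the hidden constant is essentially the length of $B$. Therefore any edge set meeting all of them has size at least $cN$ for some absolute constant $c>0$, and taking $N$ proportional to $k$ guarantees covering number at least $k$.

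The heart of the proof is the packing upper bound: showing that $G_k$ contains no two edge-disjoint $B$-expansions. The intuition is that every $B$-expansion encodes models of two long cycles (the top and bottom rows of $B$, each of length $\geq 6$) joined by at least $4$ internally vertex-disjoint paths, so such a model in the cylindrical graph is forced to wind once around the cylinder. Two edge-disjoint $B$-expansions would then yield two edge-disjoint wall-like substructures both winding around, which a height-$4$ cylinder simply cannot accommodate; a cut or cycle-space argument on the cylinder should formalise this obstruction.

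The main obstacle is exactly this last step. One has to rule out exotic models in which a single branch set spans many columns, backtracks along the cylinder, or uses the twist to reroute a row of the wall; this is where the size threshold $6\times 4$ should enter in an essential way, since walls below this size are flexible enough to be packed edge-disjointly in any natural gadget. I expect the argument to require a delicate case analysis of how the branch sets of a $B$-model can sit in the cylindrical wall, and most likely an appeal to a structural linkage result (plausibly the one foreshadowed by the reference \emph{linkageMainLemma} in the preamble).
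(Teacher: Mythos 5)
Your construction does not work: the packing half fails. A wall $B$ is planar and of bounded height, so once your cylindrical (or M\"obius) wall has height at least that of $B$ and large circumference $N$, any sufficiently long \emph{consecutive block of columns} already contains a $B$-minor embedded ``flat'', without winding around the cylinder. Taking two disjoint such blocks on opposite sides of the cylinder gives two vertex-disjoint (hence edge-disjoint) $B$-expansions, so $G_k$ packs two copies and the whole argument collapses. The intuition that every model ``is forced to wind once around the cylinder'' is exactly the mechanism that defeats Erd\H{o}s--P\'osa for \emph{non-planar} targets (where every model must use the non-trivial topology of the host surface); it cannot apply to a planar target, which is why planar graphs do have the vertex-Erd\H{o}s-P\'osa property and why refuting the edge version requires a genuinely different gadget.

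The paper's route is to make the scarce resource combinatorial rather than topological. One takes $B$, deletes two carefully chosen edges $e_1,e_2$ on the outer face, replaces every remaining edge by $r$ parallel paths (an $r$-fold copy, so that no set of fewer than $r$ edges can destroy this part), and reattaches the four endpoints of $e_1,e_2$ to the terminals $a,b,c,d$ of a \emph{condensed wall} $W$ of size $r$. The condensed wall has two key properties: after deleting at most $r-1$ edges it still contains an ($a$--$b$, $c$--$d$)-linkage (so no small hitting set exists), yet it never contains two edge-disjoint such linkages (Lemma~\ref{lem:noTwoLinkages}). The entire technical work of the paper (Lemma~\ref{wall:lem:linkageMainLemma} and the lemmas on embedding $B_3,\dots,B_7$ into $W$) goes into showing that every $B$-expansion in this graph must route an ($a$--$b$, $c$--$d$)-linkage through $W$, which is where the $6\times 4$ threshold and the brick-counting arguments enter. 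If you want to salvage your approach you would have to replace the cylinder by a gadget with such a ``no two edge-disjoint linkages'' property; the cylinder itself has no analogous bottleneck.
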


\subsection{Condensed Wall}

The main gadget used for proving that walls do not have the edge-Erd\H{o}s-P\'{o}sa property is a wall-like structure called \emph{condensed wall} introduced by Bruhn et. al. \cite{bruhn18}, see Figure~\ref{fig:condWall}.

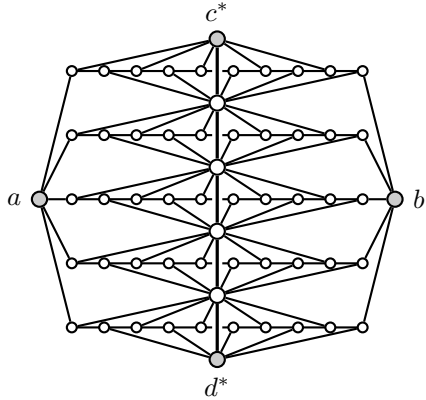
\begin{figure}[bht] 
\centering
\begin{tikzpicture}[scale=0.85]
\tikzstyle{tinyvx}=[thick,circle,inner sep=0.cm, minimum size=1.3mm, fill=white, draw=black]

\def\vstep{1}
\def\hstep{0.5}
\def\hwidth{9}
\def\hheight{4}

\def\totalheight{\hheight*\vstep}
\def\totalwidth{\hwidth*\hstep}
\pgfmathtruncatemacro{\minustwo}{\hwidth-2}
\pgfmathtruncatemacro{\minusone}{\hwidth-1}

\foreach \j in {0,...,\hheight} {
\draw[medge] (0,\j*\vstep) -- (\hwidth*\hstep,\j*\vstep);
\foreach \i in {0,...,\hwidth} {
\node[tinyvx] (v\i\j) at (\i*\hstep,\j*\vstep){};
}
}

\foreach \j in {1,...,\hheight}{
\node[hvertex] (z\j) at (0.5*\hwidth*\hstep,\j*\vstep-0.5*\vstep) {};
}
\pgfmathtruncatemacro{\plusvone}{\hheight+1}

\node[hvertex,fill=hellgrau,label=above:$c^*$] (z\plusvone) at (0.5*\totalwidth,\totalheight+0.5*\vstep) {};
\node[hvertex,fill=hellgrau,label=below:$d^*$] (z0) at (0.5*\totalwidth,-0.5*\vstep) {};

\foreach \j in {1,...,\plusvone}{
\pgfmathtruncatemacro{\subone}{\j-1}
\draw[line width=1.3pt,double distance=1.2pt,draw=white,double=black] (z\j) to (z\subone);
\foreach \i in {0,2,...,\hwidth}{
\draw[medge] (z\j) to (v\i\subone);
}
}

\foreach \j in {0,...,\hheight}{
\foreach \i in {1,3,...,\hwidth}{
\draw[medge] (z\j) to (v\i\j);
}
}

\pgfmathtruncatemacro{\minusvone}{\hheight-1}
\node[hvertex,fill=hellgrau,label=left:$a$] (a) at (-\hstep,0.5*\totalheight) {};
\foreach \j in {0,...,\hheight} {
\draw[medge] (a) -- (v0\j);
}

\node[hvertex,fill=hellgrau,label=right:$b$] (b) at (\totalwidth+\hstep,0.5*\totalheight) {};
\foreach \j in {0,...,\hheight} {
\draw[medge] (v\hwidth\j) to (b);
}
\end{tikzpicture}
\caption{A condensed wall of size 5.}
\label{fig:condWall}
\end{figure}

A condensed wall $W$ of size $r \in \N$ is the graph consisting of the following:
\begin{itemize}
\item For every $j \in [r]$, let $P^j = {u^j}_1, \ldots , {u^j}_{2r}$ be a path of length $2r - 1$ and for $j \in \{0\} \cup [r]$, let $z_j$ be a vertex. Moreover, let $a$, $b$ be two further vertices.
\item For every $i, j \in [r]$, add the edges $z_{j-1} {u^j}_{2i - 1}, z_{j} {u^j}_{2i}, z_{i-1} {z_i}, a {u^j}_{1}$ and $b {u^j}_{2r}$.
\end{itemize}
We define $c = z_0$ and $d = z_r$ and refer to
\begin{displaymath}
W_j = W[ \{ {u^j}_1, \ldots , {u^j}_{2r}, z_{j - 1}, z_j \} ]
\end{displaymath}

as the \emph{j-th layer of W}. 
Note that the layers of $W$ are precisely the blocks of $W - \{a, b\}$.
We will refer to the vertices $a, b$ quite often, and whenever we write $a$ or $b$ in this article, we refer to those vertices in a condensed wall.
The vertices connecting the layers of $W$ are $z_i, i \in \{0\} \cup [r]$, and we will call those \emph{bottleneck vertices}. This includes the vertices $c$ and $d$.
The edges $z_{i-1} z_i, i \in [r]$ are called \emph{jump-edges}.

For vertices $a, b, c, d$, an \emph{($a$-$b$, $c$-$d$)-linkage} is the vertex-disjoint union of an $a$-$b$-path with a $c$-$d$-path.

\begin{lemma}[Bruhn et. al. \cite{bruhn18}] \label{lem:noTwoLinkages}
There are no two edge-disjoint ($a$-$b$, $c$-$d$)-linkages in a condensed wall.
\end{lemma}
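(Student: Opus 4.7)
The plan is to argue by contradiction: assume there exist two edge-disjoint linkages $L_1 = A_1 \cup C_1$ and $L_2 = A_2 \cup C_2$. The idea is that the block structure of $W - \{a,b\}$ forces each $A_i$ to avoid every bottleneck vertex, and then the two $c$-$d$-paths are pushed into such a narrow tube inside a single layer that they must collide on the jump edge.

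Because $A_i$ contains $a, b$ while $C_i$ is vertex-disjoint from $A_i$, each $C_i$ lies in $W - \{a,b\}$. The layers $W_1, \ldots, W_r$ are the blocks of $W - \{a,b\}$, sharing only the cut vertices $z_1, \ldots, z_{r-1}$, with $c \in V(W_1)$ and $d \in V(W_r)$; hence the block-cut-vertex tree is a path, and every $c$-$d$-path in $W - \{a,b\}$ must visit $z_0, z_1, \ldots, z_r$ in order, inducing on each layer $W_j$ a single $z_{j-1}$-$z_j$ sub-path. Consequently, each $A_i$ avoids every $z_j$. Since $W - \{z_0, \ldots, z_r\}$ is the edge-disjoint union of the $r$ internally disjoint $a$-$b$-paths $Q^j = a\, u^j_1\, u^j_2 \ldots u^j_{2r}\, b$, we get $A_i = Q^{j_i}$ for some $j_i \in [r]$, and the edge-disjointness of $A_1, A_2$ forces $j_1 \neq j_2$.

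The decisive step is to compare $C_1$ and $C_2$ inside the single layer $W_{j_1}$. For $C_1$, vertex-disjointness from $A_1 = Q^{j_1}$ forbids every $u^{j_1}_k$, so the $z_{j_1-1}$-$z_{j_1}$ sub-path of $C_1$ through $W_{j_1}$ must be the jump edge $z_{j_1-1} z_{j_1}$. For $C_2$, only edge-disjointness from $A_1$ is available; but once the edges of $P^{j_1}$ are deleted from $W_{j_1}$, each $u^{j_1}_k$ becomes a pendant vertex attached solely to $z_{j_1-1}$ (for odd $k$) or solely to $z_{j_1}$ (for even $k$), so the jump edge is again the only remaining $z_{j_1-1}$-$z_{j_1}$ path. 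Hence both $C_1$ and $C_2$ contain the edge $z_{j_1-1}z_{j_1}$, contradicting the edge-disjointness of $L_1$ and $L_2$.

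The only mildly delicate point is this uniqueness claim for the $z_{j_1-1}$-$z_{j_1}$ path in $W_{j_1} - E(P^{j_1})$, which follows directly from the bipartite attachment of the two bottlenecks to the odd- and even-indexed vertices of $P^{j_1}$. Beyond this structural bookkeeping I do not foresee any substantive obstacle.
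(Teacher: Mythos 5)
The paper does not prove this lemma itself; it is imported from \cite{bruhn18}, so there is no internal proof to compare against. Your argument is correct and is essentially the standard one: the block decomposition of $W-\{a,b\}$ into the layers forces every $c$-$d$-path to visit $z_0,\dots,z_r$ in order, hence each $a$-$b$-path of a linkage must be one of the internally disjoint paths $Q^j$ through a single row; and your analysis of the layer $W_{j_1}$ correctly shows that both $c$-$d$-paths are then forced onto the jump edge $z_{j_1-1}z_{j_1}$, contradicting edge-disjointness.
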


\subsection{Modification of the condensed wall}

A \emph{modified condensed wall} is a condensed wall without jump-edges $z_{i-1} z_i$ for all $i \in [r]$. 
Throughout, let $W$ be a condensed wall and let $W^-$ be a modified condensed wall that does not contain jump-edges $z_{i-1}z_i$.

\subsection{Definition of Walls}

For $m,n \in \N$, an \emph{elementary grid} of size $m \times n$ is a graph with vertices $v_{i,j}$ for all $i \in [m], j \in [n]$ and edges $v_{i,j} v_{i+1,j} \,\, \forall i \in [m-1], j \in [n]$ as well as $v_{i,j} v_{i,j+1} \,\, \forall i \in [m], j \in [n-1]$. A \emph{grid} is a subdivision of an elementary grid.

%
%
%
%
%

A wall is the subcubic variant of a grid. We define an \emph{elementary wall} as an elementary grid with every second vertical edge removed. That is, an elementary wall of size $m \times n$ is an elementary grid of size $m+1 \times 2n+2$ with every edge $v_{i,2j} v_{i+1,2j} \, , i \in [m], i \text{ is odd}, j \in [n+1]$ and every edge $v_{i,2j-1} v_{i+1,2j-1} \, , i \in [m], i \text{ is even}, j \in [n+1]$ being removed. Additionally, we remove all vertices of degree~$1$ and their incident edge.
The \emph{$i^\text{th}$ row} of an elementary wall is the induced subgraph on $v_{i,1},\ldots, v_{i,2n+2}$ for $i\in [m+1]$ (ignore the vertices that have been removed); this is a path. 
There are exactly $n+1$ disjoint paths between the first row and the $(m+1)^\text{th}$ row. These are the \emph{columns} of an elementary wall. The \emph{boundary} of an elementary wall is the union of the first and last row together with the first and last column. The \emph{bricks} of an elementary wall are its $6$-cycles. (See Figure~\ref{fig:elwall})

\begin{figure}[ht] 
\centering
\begin{tikzpicture}
\tikzstyle{tinyvx}=[thick,circle,inner sep=0.cm, minimum size=1.3mm, fill=white, draw=black]
\tikzstyle{vx}=[thick,circle,inner sep=0.cm, minimum size=1.6mm, fill=white, draw=black]
\tikzstyle{marked}=[line width=3pt,color=dunkelgrau]
\tikzstyle{point}=[thin,->,shorten >=2pt,color=dunkelgrau]
\tikzstyle{edg}=[draw,thick]

\def\wallheight{8}
\def\brickheight{0.4}

\pgfmathtruncatemacro{\lastrow}{\wallheight}
\pgfmathtruncatemacro{\penultimaterow}{\wallheight-1}
\pgfmathtruncatemacro{\lastrowshift}{mod(\wallheight,2)}
\pgfmathtruncatemacro{\lastx}{2*\wallheight+1}

\draw[edg] (\brickheight,0) -- (2*\wallheight*\brickheight+\brickheight,0);
\foreach \i in {1,...,\penultimaterow}{
  \draw[edg] (0,\i*\brickheight) -- (2*\wallheight*\brickheight+\brickheight,\i*\brickheight);
}
\draw[edg] (\lastrowshift*\brickheight,\lastrow*\brickheight) to ++(2*\wallheight*\brickheight,0);

\foreach \j in {0,2,...,\penultimaterow}{
  \foreach \i in {0,...,\wallheight}{
    \draw[edg] (2*\i*\brickheight+\brickheight,\j*\brickheight) to ++(0,\brickheight);
  }
}
\foreach \j in {1,3,...,\penultimaterow}{
  \foreach \i in {0,...,\wallheight}{
    \draw[edg] (2*\i*\brickheight,\j*\brickheight) to ++(0,\brickheight);
  }
}

\def\colind{5}
\foreach \j in {2,4,6}{
  \draw[marked] (\colind*\brickheight,\j*\brickheight-2*\brickheight) -- ++ (0,\brickheight) -- ++(-\brickheight,0) -- ++(0,\brickheight) -- ++(\brickheight,0);
}
\draw[marked] (\colind*\brickheight,6*\brickheight) -- ++ (0,\brickheight) -- ++(-\brickheight,0) -- ++(0,\brickheight);

\def\rowind{4}
\foreach \i in {1,...,\lastx}{
  \draw[marked] (\i*\brickheight-\brickheight,\rowind*\brickheight) -- ++(\brickheight,0);
}

\draw[marked] (2*\wallheight*\brickheight,1*\brickheight) -- ++(0,\brickheight) coordinate[midway] (brx)
-- ++(-2*\brickheight,0)
-- ++(0,-\brickheight) -- ++(2*\brickheight,0);

\foreach \i in {1,...,\lastx}{
  \node[tinyvx] (w\i w0) at (\i*\brickheight,0){};
}
\foreach \j in {1,...,\penultimaterow}{
  \foreach \i in {0,...,\lastx}{
    \node[tinyvx] (w\i w\j) at (\i*\brickheight,\j*\brickheight){};
  }
}
\foreach \i in {1,...,\lastx}{
  \node[tinyvx] (w\i w\lastrow) at (\i*\brickheight+\lastrowshift*\brickheight-\brickheight,\lastrow*\brickheight){};
}

\foreach \i in {2,4,...,\lastx}{
  \node[tinyvx,fill=white] (w\i w\lastrow) at (\i*\brickheight+\lastrowshift*\brickheight-\brickheight,\lastrow*\brickheight){};
}

\node[anchor=mid] (tr) at (\lastx*\brickheight+0.5,\wallheight*\brickheight+0.8){$1$\textsuperscript{st} row};
\draw[point,out=270,in=0] (tr) to (w\lastx w\wallheight);


\node[anchor=mid] (vp) at (0,\wallheight*\brickheight+0.8){column};
\draw[point,out=0,in=90] (vp) to (w\colind w\wallheight);

\node[align=center] (hp) at (\lastx*\brickheight+1.2,\rowind*\brickheight+0.8){row};
\draw[point,out=270,in=0] (hp) to (w\lastx w\rowind);

\node[align=center] (br) at (\lastx*\brickheight+1.2,1*\brickheight+0.8){brick};
\draw[point,out=270,in=0] (br) to (brx);

\end{tikzpicture}
\caption{An elementary wall of size $8\times8$.}\label{fig:elwall}
\end{figure}
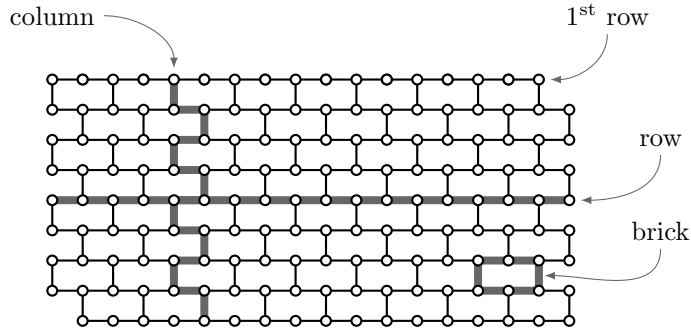

A \emph{wall} is defined as the subdivision of an elementary wall. The definition of rows, columns, boundary and bricks in an elementary wall carries over to a wall in a natural way (with some truncation of the first and last row and column). For brevity of notation, we define an \emph{$n$-wall} as a wall of size at least~$n\times n$. 


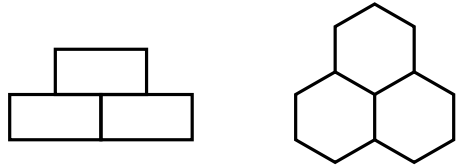
\begin{figure}[hbt] 
\centering
\begin{tikzpicture}[scale=0.6]
\tikzstyle{tinyvx}=[thick,circle,inner sep=0.cm, minimum size=1.5mm, fill=white, draw=black]

\def\side{1}
\def\height{0.5} 
\def\width{1.732} 
\def\halfwidth{0.5*\width}
\def\heightplusside{\height+\side}

\def\offset{3*\side}

\begin{scope}[shift={(-\offset,0)}]

	\draw[hedge] (0,0) rectangle (-2*\side,\side) ;
	\draw[hedge] (0,0) rectangle ( 2*\side,\side) ;
	\draw[hedge] (-\side,\side) rectangle (\side,2*\side) ;

\end{scope}

\begin{scope}[shift={(\offset,0)}]

	\draw[hedge] (0,0) -- (0,\side) -- (\halfwidth,\heightplusside) -- (\width,\side) -- (\width,0) -- (\halfwidth,-\height) -- (0,0);
	
	\draw[hedge] (0,\side) -- (-\halfwidth,\heightplusside) -- (-\width,\side) -- (-\width,0) -- (-\halfwidth,-\height) -- (0,0);
	
	\draw[hedge] (-\halfwidth,\heightplusside) -- (-\halfwidth,\heightplusside+\side) -- (0,2*\height+2*\side) -- (\halfwidth,\heightplusside+\side) -- (\halfwidth,\heightplusside);
	
\end{scope}

\end{tikzpicture}
\caption{Three bricks that might be part of a wall. Left, you see a plain drawing with bricks in the form of rectangles. Right, you see the same part of a wall but with bricks in the form of honeycombs. In this article, we will draw walls as on the right side to underline symmetry.}
\label{fig:3BrickPlainAndHoneycomb}
\end{figure}

When analyzing small parts of walls, we will not only count the number of bricks they contain, but also specify how the bricks are connected. To this end, we define $B_1, B_2, \ldots , B_{10}$ to be subdivisions of the graphs shown in Figure~\ref{fig:B_n_definition}.

\begin{figure}[hbt] 
\centering
\begin{tikzpicture}[scale=0.4]
\tikzstyle{tinyvx}=[thick,circle,inner sep=0.cm, minimum size=1.5mm, fill=white, draw=black]

\def\side{1}
\def\height{0.5} 
\def\width{1.732} 
\def\halfwidth{0.5*\width}
\def\heightplusside{\height+\side}

\def\hoffset{1.5*\width} 
\def\voffset{2*\side} 

\begin{scope}[shift={(-\width,0)}] 

	\draw[hedge] (0,0) -- (0,\side) -- (\halfwidth,\heightplusside) -- (\width,\side) -- (\width,0) -- (\halfwidth,-\height) -- (0,0);
	\draw (\halfwidth,-\height) node[below] {$B_1$};
\end{scope}

\begin{scope}[shift={(\hoffset + \width,0)}] 
	\draw[hedge] (0,0) -- (0,\side) -- (\halfwidth,\heightplusside) -- (\width,\side) -- (\width,0) -- (\halfwidth,-\height) -- (0,0);
	\draw[hedge] (0,\side) -- (-\halfwidth,\heightplusside) -- (-\width,\side) -- (-\width,0) -- (-\halfwidth,-\height) -- (0,0);
	\draw (0,-\height) node[below] {$B_2$};
\end{scope}

\begin{scope}[shift={(2*\hoffset+3*\width,0)}] 
	\draw[hedge] (0,0) -- (0,\side) -- (\halfwidth,\heightplusside) -- (\width,\side) -- (\width,0) -- (\halfwidth,-\height) -- (0,0);
	\draw[hedge] (0,\side) -- (-\halfwidth,\heightplusside) -- (-\width,\side) -- (-\width,0) -- (-\halfwidth,-\height) -- (0,0);
	\draw[hedge] (-\halfwidth,\heightplusside) -- (-\halfwidth,\heightplusside+\side) -- (0,2*\height+2*\side) -- (\halfwidth,\heightplusside+\side) -- (\halfwidth,\heightplusside);
	\draw (0,-\height) node[below] {$B_3$};
\end{scope}

\begin{scope}[shift={(3*\hoffset+5*\width,\heightplusside)}] 
	\draw[hedge] (0,0) -- (0,\side) -- (\halfwidth,\heightplusside) -- (\width,\side) -- (\width,0) -- (\halfwidth,-\height) -- (0,0);
	\draw[hedge] (0,\side) -- (-\halfwidth,\heightplusside) -- (-\width,\side) -- (-\width,0) -- (-\halfwidth,-\height) -- (0,0);
	\draw[hedge] (-\halfwidth,\heightplusside) -- (-\halfwidth,\heightplusside+\side) -- (0,2*\height+2*\side) -- (\halfwidth,\heightplusside+\side) -- (\halfwidth,\heightplusside);
	\draw[hedge] (-\halfwidth,-\height) -- (-\halfwidth,-\height-\side) -- (0,-2*\height-\side) -- (\halfwidth,-\height-\side) -- (\halfwidth,-\height);
	\draw (0,-2*\height-\side) node[below] {$B_4$};
\end{scope}


\begin{scope}[shift={(0,-\height-\voffset-2*\height-2*\side)}] 
	\draw[hedge] (0,0) -- (0,\side) -- (\halfwidth,\heightplusside) -- (\width,\side) -- (\width,0) -- (\halfwidth,-\height) -- (0,0);
	\draw[hedge] (0,\side) -- (-\halfwidth,\heightplusside) -- (-\width,\side) -- (-\width,0) -- (-\halfwidth,-\height) -- (0,0);
	\draw[hedge] (-\halfwidth,\heightplusside) -- (-\halfwidth,\heightplusside+\side) -- (0,2*\height+2*\side) -- (\halfwidth,\heightplusside+\side) -- (\halfwidth,\heightplusside);
	\draw[hedge] (-\halfwidth,-\height) -- (-\halfwidth,-\height-\side) -- (0,-2*\height-\side) -- (\halfwidth,-\height-\side) -- (\halfwidth,-\height);
	\draw[hedge] (\halfwidth,-\height-\side) -- (\width,-2*\height-\side) -- (\width+\halfwidth,-\height-\side) -- (\width+\halfwidth,-\height) -- (\width,0);
	\draw (\halfwidth,-2*\height-\side) node[below] {$B_5$};
\end{scope}

\begin{scope}[shift={(\hoffset+3*\width,-\height-\voffset-2*\height-2*\side)}] 
	\draw[hedge] (0,0) -- (0,\side) -- (\halfwidth,\heightplusside) -- (\width,\side) -- (\width,0) -- (\halfwidth,-\height) -- (0,0);
	\draw[hedge] (0,\side) -- (-\halfwidth,\heightplusside) -- (-\width,\side) -- (-\width,0) -- (-\halfwidth,-\height) -- (0,0);
	\draw[hedge] (-\halfwidth,\heightplusside) -- (-\halfwidth,\heightplusside+\side) -- (0,2*\height+2*\side) -- (\halfwidth,\heightplusside+\side) -- (\halfwidth,\heightplusside);
	\draw[hedge] (-\halfwidth,-\height) -- (-\halfwidth,-\height-\side) -- (0,-2*\height-\side) -- (\halfwidth,-\height-\side) -- (\halfwidth,-\height);
	\draw[hedge] (\halfwidth,-\height-\side) -- (\width,-2*\height-\side) -- (\width+\halfwidth,-\height-\side) -- (\width+\halfwidth,-\height) -- (\width,0);
	\draw[hedge] (\width,\side) -- (\width+\halfwidth,\heightplusside) -- (\width+\halfwidth,\heightplusside+\side) -- (\width,2*\side+2*\height) -- (\halfwidth,\heightplusside+\side);
	\draw (\halfwidth,-2*\height-\side) node[below] {$B_6$};
\end{scope}

\begin{scope}[shift={(2*\hoffset+6*\width,-\height-\voffset-2*\height-2*\side)}] 
	\draw[hedge] (0,0) -- (0,\side) -- (\halfwidth,\heightplusside) -- (\width,\side) -- (\width,0) -- (\halfwidth,-\height) -- (0,0);
	\draw[hedge] (0,\side) -- (-\halfwidth,\heightplusside) -- (-\width,\side) -- (-\width,0) -- (-\halfwidth,-\height) -- (0,0);
	\draw[hedge] (-\halfwidth,\heightplusside) -- (-\halfwidth,\heightplusside+\side) -- (0,2*\height+2*\side) -- (\halfwidth,\heightplusside+\side) -- (\halfwidth,\heightplusside);
	\draw[hedge] (-\halfwidth,-\height) -- (-\halfwidth,-\height-\side) -- (0,-2*\height-\side) -- (\halfwidth,-\height-\side) -- (\halfwidth,-\height);
	\draw[hedge] (\halfwidth,-\height-\side) -- (\width,-2*\height-\side) -- (\width+\halfwidth,-\height-\side) -- (\width+\halfwidth,-\height) -- (\width,0);
	\draw[hedge] (\width,\side) -- (\width+\halfwidth,\heightplusside) -- (\width+\halfwidth,\heightplusside+\side) -- (\width,2*\side+2*\height) -- (\halfwidth,\heightplusside+\side);
	\draw[hedge] (\width+\halfwidth,-\height) -- (\width+2*\halfwidth,0) -- (\width+2*\halfwidth,\side) -- (\width+\halfwidth,\heightplusside);
	\draw (\halfwidth,-2*\height-\side) node[below] {$B_7$};
\end{scope}


\begin{scope}[shift={(0,-\height-3*\side-4*\height-2*\voffset-2*\height-2*\side)}] 
	\draw[hedge] (0,0) -- (0,\side) -- (\halfwidth,\heightplusside) -- (\width,\side) -- (\width,0) -- (\halfwidth,-\height) -- (0,0);
	\draw[hedge] (0,\side) -- (-\halfwidth,\heightplusside) -- (-\width,\side) -- (-\width,0) -- (-\halfwidth,-\height) -- (0,0);
	\draw[hedge] (-\halfwidth,\heightplusside) -- (-\halfwidth,\heightplusside+\side) -- (0,2*\height+2*\side) -- (\halfwidth,\heightplusside+\side) -- (\halfwidth,\heightplusside);
	\draw[hedge] (-\halfwidth,-\height) -- (-\halfwidth,-\height-\side) -- (0,-2*\height-\side) -- (\halfwidth,-\height-\side) -- (\halfwidth,-\height);
	\draw[hedge] (\halfwidth,-\height-\side) -- (\width,-2*\height-\side) -- (\width+\halfwidth,-\height-\side) -- (\width+\halfwidth,-\height) -- (\width,0);
	\draw[hedge] (\width,\side) -- (\width+\halfwidth,\heightplusside) -- (\width+\halfwidth,\heightplusside+\side) -- (\width,2*\side+2*\height) -- (\halfwidth,\heightplusside+\side);
	\draw[hedge] (\width+\halfwidth,-\height) -- (\width+2*\halfwidth,0) -- (\width+2*\halfwidth,\side) -- (\width+\halfwidth,\heightplusside);
	\draw[hedge] (\width+\halfwidth,-\height-\side) -- (2*\width,-2*\height-\side) -- (2*\width+\halfwidth,-\height-\side) -- (2*\width+\halfwidth,-\height) -- (2*\width,0);
	\draw (\width,-2*\height-\side) node[below] {$B_8$};
\end{scope}

\begin{scope}[shift={(\hoffset+3*\width,-\height-3*\side-4*\height-2*\voffset-2*\height-2*\side)}] 
	\draw[hedge] (0,0) -- (0,\side) -- (\halfwidth,\heightplusside) -- (\width,\side) -- (\width,0) -- (\halfwidth,-\height) -- (0,0);
	\draw[hedge] (0,\side) -- (-\halfwidth,\heightplusside) -- (-\width,\side) -- (-\width,0) -- (-\halfwidth,-\height) -- (0,0);
	\draw[hedge] (-\halfwidth,\heightplusside) -- (-\halfwidth,\heightplusside+\side) -- (0,2*\height+2*\side) -- (\halfwidth,\heightplusside+\side) -- (\halfwidth,\heightplusside);
	\draw[hedge] (-\halfwidth,-\height) -- (-\halfwidth,-\height-\side) -- (0,-2*\height-\side) -- (\halfwidth,-\height-\side) -- (\halfwidth,-\height);
	\draw[hedge] (\halfwidth,-\height-\side) -- (\width,-2*\height-\side) -- (\width+\halfwidth,-\height-\side) -- (\width+\halfwidth,-\height) -- (\width,0);
	\draw[hedge] (\width,\side) -- (\width+\halfwidth,\heightplusside) -- (\width+\halfwidth,\heightplusside+\side) -- (\width,2*\side+2*\height) -- (\halfwidth,\heightplusside+\side);
	\draw[hedge] (\width+\halfwidth,-\height) -- (\width+2*\halfwidth,0) -- (\width+2*\halfwidth,\side) -- (\width+\halfwidth,\heightplusside);
	\draw[hedge] (\width+\halfwidth,-\height-\side) -- (2*\width,-2*\height-\side) -- (2*\width+\halfwidth,-\height-\side) -- (2*\width+\halfwidth,-\height) -- (2*\width,0);
	\draw[hedge] (2*\width,\side) -- (2*\width+\halfwidth,\heightplusside) -- (2*\width+\halfwidth,\heightplusside+\side) -- (2*\width,2*\side+2*\height) -- (\width+\halfwidth,\heightplusside+\side);
	\draw (\width,-2*\height-\side) node[below] {$B_9$};
\end{scope}

\begin{scope}[shift={(2*\hoffset+6*\width,-\height-3*\side-4*\height-2*\voffset-2*\height-2*\side)}] 
	\draw[hedge] (0,0) -- (0,\side) -- (\halfwidth,\heightplusside) -- (\width,\side) -- (\width,0) -- (\halfwidth,-\height) -- (0,0);
	\draw[hedge] (0,\side) -- (-\halfwidth,\heightplusside) -- (-\width,\side) -- (-\width,0) -- (-\halfwidth,-\height) -- (0,0);
	\draw[hedge] (-\halfwidth,\heightplusside) -- (-\halfwidth,\heightplusside+\side) -- (0,2*\height+2*\side) -- (\halfwidth,\heightplusside+\side) -- (\halfwidth,\heightplusside);
	\draw[hedge] (-\halfwidth,-\height) -- (-\halfwidth,-\height-\side) -- (0,-2*\height-\side) -- (\halfwidth,-\height-\side) -- (\halfwidth,-\height);
	\draw[hedge] (\halfwidth,-\height-\side) -- (\width,-2*\height-\side) -- (\width+\halfwidth,-\height-\side) -- (\width+\halfwidth,-\height) -- (\width,0);
	\draw[hedge] (\width,\side) -- (\width+\halfwidth,\heightplusside) -- (\width+\halfwidth,\heightplusside+\side) -- (\width,2*\side+2*\height) -- (\halfwidth,\heightplusside+\side);
	\draw[hedge] (\width+\halfwidth,-\height) -- (\width+2*\halfwidth,0) -- (\width+2*\halfwidth,\side) -- (\width+\halfwidth,\heightplusside);
	\draw[hedge] (\width+\halfwidth,-\height-\side) -- (2*\width,-2*\height-\side) -- (2*\width+\halfwidth,-\height-\side) -- (2*\width+\halfwidth,-\height) -- (2*\width,0);
	\draw[hedge] (2*\width,\side) -- (2*\width+\halfwidth,\heightplusside) -- (2*\width+\halfwidth,\heightplusside+\side) -- (2*\width,2*\side+2*\height) -- (\width+\halfwidth,\heightplusside+\side);
	\draw[hedge] (2*\width+\halfwidth,-\height) -- (2*\width+2*\halfwidth,0) -- (2*\width+2*\halfwidth,\side) -- (2*\width+\halfwidth,\heightplusside);
	\draw (\width,-2*\height-\side) node[below] {$B_{10}$};
\end{scope}

\end{tikzpicture}
\caption{Up to isomorphism, these graphs are defined to be the only elementary $B_n$ for $n \leq 10$.}
\label{fig:B_n_definition}
\end{figure}
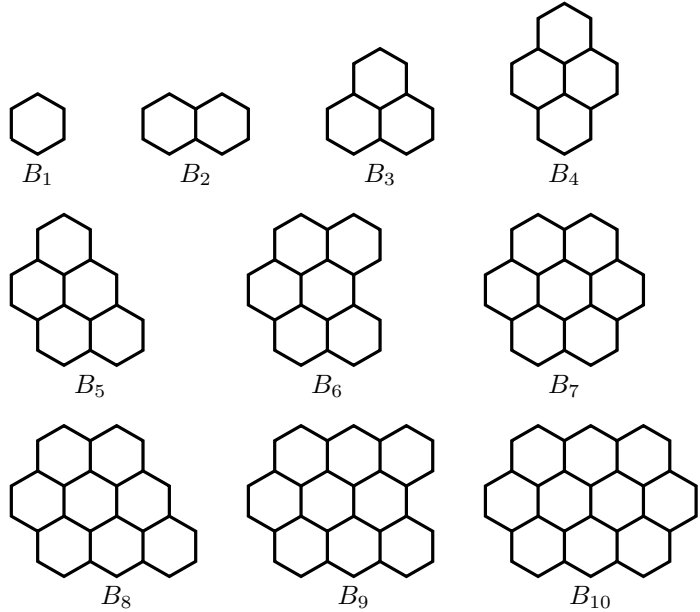

\section{Construction}

For every graph $G$ and every $r \in \N$, an \textbf{$r$-fold $G$} is a graph $G'$ where every edge in $G$ is replaced by $r$ edge-disjoint paths of length~$2$.

Given a size $f(2)$ of some hypothetical hitting set, we choose $r \in \N$ with $r > f(2)$. Let $B$ be a wall of size at least $6 \times 4$. Note that for every wall, there are exactly two bricks which are adjacent to only two other bricks, while all others are adjacent to at least three bricks.
We define the \emph{body} of $B$ to be the minimal subgraph of $B$ that contains all bricks with at least three adjacent bricks in $B$. In other words, the body of $B$ contains everything but the two less connected bricks in the corners of $B$.

In the body of $B$, we pick two edges $e_1, e_2$ on the outer face of $B$ such that every $e_1$-$e_2$~path $P$ in $B$ is incident with at least $7$~bricks of $B$ aside from the ones containing $e_1$ or $e_2$. This is possible since $B$ has size at least $6 \times 4$, so there are at least $16$ bricks adjacent to the outer face of $B$.

Now we define $G^*$ to be the graph consisting of the union of an $r$-fold $B - \{e_1,e_2\}$ and a condensed wall $W$ of size~$r$ whose terminals $a, b, c$ and $d$ are identified with the endvertices of $e_1$ and $e_2$ such that $a$ and $b$ are incident with the same edge and every $ac$-path in $G^* - (W - \{a,b,c,d\})$ disconnects $b$ from $d$ in $G^* - (W - \{a,b,c,d\})$.

\section{Results}

\subsection{Strategy}

First we check that the construction of $G^*$ does not allow for an edge hitting set of size less than $r$.

\begin{lemma} \label{wall:lem:noEdgeHittingSet}
For every set $F \subseteq E(G^*)$ with $|F| \leq r-1$, $G^* - F$ contains a $B$-expansion.
\end{lemma}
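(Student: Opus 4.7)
Split $F$ as $F_1 \cup F_2$, where $F_1 = F \cap E(G^* - W)$ lies in the $r$-fold copy of $B - \{e_1, e_2\}$ and $F_2 = F \cap E(W)$, so that $|F_1| + |F_2| \leq r - 1$. The plan is to build independently a subdivision $H$ of $B - \{e_1, e_2\}$ in the $r$-fold part avoiding $F_1$, and an ($a$-$b$, $c$-$d$)-linkage $(P_1, P_2)$ in $W$ avoiding $F_2$. Since $W$ meets the rest of $G^*$ only at $a, b, c, d$, the union $H \cup P_1 \cup P_2$ is then a subdivision of $B$ (hence a $B$-expansion) in $G^* - F$, with $P_1$ realizing $e_1 = ab$ and $P_2$ realizing $e_2 = cd$. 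Building $H$ is immediate: each edge $e$ of $B - \{e_1, e_2\}$ is replaced by $r$ edge-disjoint paths of length $2$, and every edge of $F_1$ lies on exactly one such path, so at most $|F_1| \leq r - 1$ of the $r$ representatives of $e$ are destroyed and at least one survives; picking one survivor per edge assembles $H$.

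For the linkage, call a layer $j \in [r]$ of $W$ \emph{good} if the canonical $a$-$b$-path $a\,u^j_1\,u^j_2\cdots u^j_{2r}\,b$ contains no edge of $F_2$ and the jump-edge $z_{j-1}z_j$ is not in $F_2$. Every edge of $F_2$ either lies on the canonical $a$-$b$-path of a unique layer, or is the jump-edge of a unique layer, or is a connection edge of the form $z_{j-1}u^j_{2i-1}$ or $z_j u^j_{2i}$ affecting neither condition; hence each edge of $F_2$ spoils at most one layer, so at most $|F_2| \leq r - 1$ layers are bad and some layer $j^*$ is good. Set $P_1 := a\,u^{j^*}_1\,u^{j^*}_2\cdots u^{j^*}_{2r}\,b$ and build $P_2$ by concatenating, for $i = 1, \ldots, r$, a $z_{i-1}$-$z_i$-segment: use the jump-edge $z_{i-1}z_i$ whenever it is intact in $G^* - F$ (which by the choice of $j^*$ covers $i = j^*$), and otherwise detour through one of the $r$ edge-disjoint length-$3$ paths $z_{i-1}\,u^i_{2\ell-1}\,u^i_{2\ell}\,z_i$ of layer $i$, $\ell \in [r]$, of which at most $|F_2| \leq r - 1$ are destroyed so at least one survives. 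The internal vertices of $P_1$ lie in $\{u^{j^*}_1, \ldots, u^{j^*}_{2r}\}$ while those of $P_2$ lie in $\{z_1, \ldots, z_{r-1}\} \cup \bigcup_{i \neq j^*}\{u^i_1, \ldots, u^i_{2r}\}$, so $P_1$ and $P_2$ are vertex-disjoint and $(P_1, P_2)$ is the required linkage.

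Adjoining $P_1$ and $P_2$ to $H$ then yields a subdivision of $B$ in $G^* - F$. The main technical point is the counting in the linkage step; what makes it go through is the clean partition of $E(W)$ into path-edges (each on a unique canonical $a$-$b$-path), jump-edges (one per layer), and connection edges (which enter only the detour bypasses, where the $r$-fold supply of edge-disjoint alternatives can already absorb up to $r-1$ losses).
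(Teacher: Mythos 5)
Your proof is correct and follows the same decomposition as the paper's: absorb $F$ in the $r$-fold copy of $B-\{e_1,e_2\}$ by pigeonhole and find an ($a$-$b$, $c$-$d$)-linkage in $W-F$, then glue along $a,b,c,d$. The paper merely asserts that $W-F$ contains such a linkage, whereas you construct it explicitly (a good layer for the $a$-$b$-path, jump-edges or surviving detours for the $c$-$d$-path), and your counting and disjointness checks are sound.
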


\begin{proof}
By construction, $G^* - W^0$ contains an $r$-fold $B - \{e_1,e_2\}$, so $G^* - W^0 - F$ still contains an embedding of $B - \{e_1, e_2\}$. Furthermore, $W - F$ contains an ($a$--$b$, $c$--$d$) linkage. Together, this yields a $B$-expansion in $G^* - F$.
\end{proof}

This allows us to prove the first half of Theorem~\ref{wall:thm:WallsHaveNotEdgeErdosPosaIf6x4Contained}.

\begin{proof}
Let $B$ be a wall of size $6 \times 4$, and let $G^*$ be as described above. By Lemma~\ref{wall:lem:noEdgeHittingSet}, there is no edge hitting set. We thus only have to show that there can be no two edge-disjoint embeddings of $B$ in $G^*$

Now let $U$ be an arbitrary embedding of $B$ in $G^*$. Suppose we were able to show that $U$ must contain an ($a$--$b$, $c$--$d$) linkage in $W$. By Lemma~\ref{lem:noTwoLinkages} there can be no two edge-disjoint linkages in $W$, implying there can be no two edge-disjoint embeddings $U_1, U_2$ of $B$ in $G^*$. This implies that $B$ does not have the edge-Erd\H{o}s-P\'{o}sa property, which finishes our proof.
\end{proof}

For the remainder of this chapter, let $U$ be a fixed embedding of $B$ in $G^*$. It remains to show that:

\begin{lemma} \label{wall:lem:linkageMainLemma}
$U$ contains an ($a$--$b$, $c$--$d$) linkage in $W$.
\end{lemma}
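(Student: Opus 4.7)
The plan is to analyse $U \cap W$ and argue it must contain both an $a$-$b$ and a $c$-$d$ path, which can then be chosen vertex-disjoint. First, I would rule out the case that $U$ avoids $W^0 := W - \{a,b,c,d\}$ entirely. The graph $G^* - W^0$ is an $r$-fold of $B - \{e_1, e_2\}$; the $r$-fold operation does not add anything new to the minor structure of its base graph (its non-terminal vertices all have degree two), and $B$ is not a minor of $B - \{e_1, e_2\}$. Hence $U$ must use at least one edge inside $W^0$.

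Second, because $\{a,b,c,d\}$ separates $W^0$ from the rest of $G^*$, the intersection $U \cap W$ decomposes into a family of internally $W^0$-disjoint paths with endpoints in $\{a,b,c,d\}$, together with (possibly) branch-sets of $U$ that lie entirely in $W$. Each such path plays the role of either a portion of a branch-set of $U$ or of an edge-path of $U$ representing some $e \in E(B)$. The core combinatorial claim is that this family must include both an $a$-$b$ path and a $c$-$d$ path. The construction's crossing condition — every $a$-$c$ path in $G^* - W^0$ disconnects $b$ from $d$ — rules out routings through $W$ of type $\{a\text{-}c,\, b\text{-}d\}$ or $\{a\text{-}d,\, b\text{-}c\}$: such connections would, together with $G^* - W^0$, force vertex-disjoint $a$-$c$ and $b$-$d$ subpaths outside $W$. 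The $7$-brick separation between $e_1$ and $e_2$ rules out the complementary cases, where only a single connection through $W$ is used (say an $a$-$b$ path alone), by showing that no local rerouting in $G^* - W^0$ can reconstitute the part of $B$ covered by $e_2$. I would carry out this case analysis using the sub-configurations $B_1,\ldots,B_{10}$ of Figure~\ref{fig:B_n_definition} to describe the local structure of $U$ near the four terminals.

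Having obtained an $a$-$b$ path $P_{ab}$ and a $c$-$d$ path $P_{cd}$ inside $U \cap W$, the final step is to make them vertex-disjoint. Since $U$ is a minor model, distinct edge-paths of $U$ share no interior vertices, and so $P_{ab}$ and $P_{cd}$ can meet only at vertices contained in branch-sets of $U$; these branch-sets are connected subgraphs of $W$, so standard rerouting within them converts any intersection into a genuine linkage.

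The main obstacle in this plan is the middle step: carrying out the case analysis that rules out every configuration of paths in $U \cap W$ that does not already contain an $a$-$b$ and a $c$-$d$ path. The hypotheses available — namely, the $7$-brick gap between $e_1$ and $e_2$, the crossing condition, and the $6\times 4$ size of $B$ — are exactly calibrated to this task, but the bookkeeping required to enumerate how $U$ can interact with the four terminals $\{a,b,c,d\}$ is where the work lies.
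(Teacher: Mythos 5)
Your high-level frame (split $U$ into the part inside $W$ and the part outside, then exploit the crossing condition and the brick gap between $e_1$ and $e_2$) matches the paper's, but the proposal has two genuine gaps. First, the step you defer --- ruling out every configuration of $U\cap W$ that does not already yield the linkage --- is essentially the entire proof, and your decomposition of $U\cap W$ into ``paths with endpoints in $\{a,b,c,d\}$ plus branch-sets lying entirely in $W$'' understates what can happen: a large connected piece of $U$, containing many bricks and many degree-$3$ branch vertices, can a priori sit inside $W^0$ with only two or three attachments to $\{a,b,c,d\}$. Excluding that possibility is what the paper's whole preparatory subsection is for (every $B_3$ in $W-\{a,b\}$ uses a bottleneck vertex, no $B_4$ or $B_7$ fits, the grand-cycle and $B_1^2$ argument for a $B_{10}$, etc.); none of this is present in your plan, and it cannot be replaced by the crossing condition, which only constrains routings outside $W$. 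Note also that the paper's Case~1 shows this configuration is impossible outright, rather than extracting an $a$--$b$ and a $c$--$d$ path from it. Second, your closing step is incorrect as stated: from an $a$--$b$ path and a $c$--$d$ path in $U\cap W$ one cannot in general produce vertex-disjoint ones by rerouting inside branch sets. The condensed wall is built around bottleneck vertices precisely so that paths between the terminals are forced through common cut vertices; if both paths pass through the same $z_i$, no rerouting within a connected branch set separates them. The paper obtains disjointness structurally: in its Case~1 the two paths repair two bricks whose neighbourhoods force them apart, and in Case~2 they arise as complementary arcs of a single cycle.
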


\subsection{Embedding Walls in a Heinlein Wall}

To be able to control the embedding of $U$, we need to make sure that no large part of it can be embedded in $W$. To this end, we study which small walls are too large to fit in $W$.
For our first lemmas, we will label some parts of a $B_3$ (see Figure~\ref{fig:3BrickWallLabels}). Let $x$ be the unique vertex in $B_3$ whose neighbors (in an elementary wall) have degree~$3$.
Let the other vertices with degree three be called $w, y$ and $z$. Let the brick that is not incident with $w$ be called $C_1$. Let $P_1, P_2$ and $P_3$ be the $yz$-, $zw$- and $wy$-paths not crossing $x$.

\begin{figure}[hbt] 
\centering
\begin{tikzpicture}[scale=0.6]
\tikzstyle{tinyvx}=[thick,circle,inner sep=0.cm, minimum size=1.5mm, fill=white, draw=black]

\def\side{1}
\def\height{0.5} 
\def\width{1.732} 
\def\halfwidth{0.5*\width}
\def\heightplusside{\height+\side}

\draw[hedge] (0,0) -- (0,\side) -- (\halfwidth,\heightplusside) -- (\width,\side) -- (\width,0) -- (\halfwidth,-\height) -- (0,0);

\draw[hedge] (0,\side) -- (-\halfwidth,\heightplusside) -- (-\width,\side) -- (-\width,0) -- (-\halfwidth,-\height) -- (0,0);

\draw[hedge] (-\halfwidth,-\height) -- (-\halfwidth,-\height-\side) -- (0,-\height-\side-\height) -- (\halfwidth,-\height-\side) -- (\halfwidth,-\height);

\node[smallvx, label=right:$x$] (x) at (0,0) {};
\node[smallvx, label=above:$w$] (w) at (0,\side) {};
\node[smallvx, label=right:$y$] (y) at (\halfwidth,-\height) {};
\node[smallvx, label=left:$z$] (z) at (-\halfwidth,-\height) {};
\node at (0,-\height-0.5*\side) {$C_1$};
\node[label=below:$P_1$] at (0,-2*\height-\side) {};
\node[label=left:$P_2$] at (-\width,\side) {};
\node[label=right:$P_3$] at (\width,\side) {};

\end{tikzpicture}
\caption{A $B_3$ with labels.}
\label{fig:3BrickWallLabels}
\end{figure}
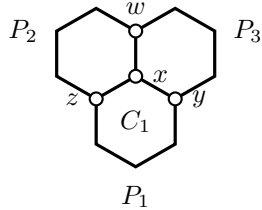

\begin{lemma} \label{wall:lem:centralVertexIsBottleneck}
For every $B_3$ in $W - \{a, b\}$, $x$ is a bottleneck vertex.
\end{lemma}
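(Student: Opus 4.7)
The strategy is to localise $B_3$ inside a single layer of $W-\{a,b\}$ and then derive a contradiction from an accounting of how the bottlenecks of that layer are ``used'' by $B_3$, assuming $x$ is non-bottleneck.

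First I would observe that $B_3$ is a subdivision of $K_4$ and hence $2$-connected, so since the blocks of $W-\{a,b\}$ are exactly the layers $W_j$, the copy of $B_3$ sits inside a single $W_j$. In $W_j$ the only vertices of degree greater than three are the two bottlenecks $z_{j-1}, z_j$; every interior path vertex $u^j_i$ has degree exactly three with one edge to a bottleneck; and $W_j\setminus\{z_{j-1},z_j\}=P^j$ is acyclic, so every cycle of $W_j$ contains a bottleneck. Under the hypothesis that $x$ is not a bottleneck, $x=u^j_i$ for some interior $i$ and all three of its $W_j$-edges (including the unique edge to a bottleneck neighbour $z^*$) are used by the three subdivision paths of $B_3$ joining $x$ to $w,y,z$.

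The pivotal observation I would establish is: for any non-bottleneck branch vertex $v$ of $B_3$ with bottleneck neighbour $\beta(v)\in\{z_{j-1},z_j\}$, the edge $v\beta(v)$ is the first edge of some subdivision path $P$ of $B_3$ leaving $v$, so either $P$ has length exactly $1$ and $\beta(v)$ is the opposite endpoint of $P$, or $\beta(v)$ is an internal vertex of $P$. The length-$1$ option forces $P$ to be one of the three short branches from $x$ (the outer paths between $w,y,z$ have length $\geq 4$ in $B_3$), so either $v=x$ and one of $w,y,z$ is itself a bottleneck, or $v\in\{w,y,z\}$ and $\beta(v)=x$---impossible since $x$ is non-bottleneck. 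Consequently, for every non-bottleneck $v\in\{w,y,z\}$, $\beta(v)$ is an internal vertex of some subdivision path of $B_3$; and any non-branch bottleneck of $W_j$ is internal to at most one such path, where it uses exactly two of its $B_3$-incident edges.

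With these observations I would case-split on how many of $w,y,z$ are bottlenecks (zero, one, or two---three is impossible since $W_j$ has only two bottlenecks). In each case, pigeonholing the values $\beta(\cdot)\in\{z_{j-1},z_j\}$ forces two non-bottleneck branches $v_1,v_2$ to share the same $\beta$, and their two edges at $\beta$ can coexist on a single subdivision path only when that path has length $2$ with endpoints $v_1,v_2$. Since no outer path $P_i$ can have length $2$, such a sharing pair must include $x$; in the case that $w,y,z$ are all non-bottleneck this would require two disjoint sharing pairs each containing $x$ (impossible), while the other cases force either three $B_3$-edges to meet at a single non-branch bottleneck, or a bottleneck neighbour of a non-bottleneck branch to be itself a branch vertex---each impossible. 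The main technical difficulty is the bookkeeping: the length-$1$ asymmetry between the three branches at $x$ (which may be short) and the outer paths $P_i$ (which cannot be) must be tracked carefully, but once isolated, every subcase collapses to a one-line pigeonhole.
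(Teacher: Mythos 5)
Your plan is correct and rests on the same core mechanism as the paper's proof: once $B_3$ is confined to a single layer by $2$-connectedness, every non-bottleneck branch vertex has degree exactly~$3$ there and is therefore forced to put its unique bottleneck edge into $B_3$, and the outer paths $P_1,P_2,P_3$ (length at least~$4$ in any subdivision) are too long to absorb the resulting short detours through a degree-$2$ bottleneck. The difference is organizational: the paper argues locally, placing $x$'s bottleneck neighbour on the $xw$-branch, deducing that the brick $C_1$ must contain the other bottleneck (every cycle of a layer meets a bottleneck), and then deriving the contradiction from where $y$'s bottleneck edge and the path $P_3$ can go; you instead run a uniform accounting of $\beta(v)$ over all four branch vertices and pigeonhole into the two bottlenecks. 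Your version never needs the cycle argument for $C_1$, makes the exhaustiveness of the case analysis transparent (zero, one or two of $w,y,z$ being bottlenecks), and in particular makes explicit the sub-cases that the paper's terser write-up glosses over; the only care needed in writing it out is the one you already flag, namely that a sharing pair yields a length-$2$ subdivision path and hence must be a branch at $x$, which is exactly what collapses each case.
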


\begin{proof}
Suppose $x$ would not be a bottleneck vertex. Then $x$ must be on the row $R_i$ in some layer $W_i$, and thus $x$ has only degree~$3$ in $W$.
We observe that $x$ is adjacent with a bottleneck vertex, say $z_i$. Therefore, one of the edges incident with $x$ is also incident with $z_i$. Therefore, we can assume that one of $w, y$ and $z$ (say $w$) is identical to $z_{i}$ or $z_{i}$ lies on the $wx$-path not crossing $y$ or $z$. In any case, $z_i$ is not part of the brick $C_1$.

As $C_1$ contains $x$ and is a cycle, it must also contain $z_{i-1}$. Additionally, $y$ and $z$ lie on $C_1$ but cannot both be incident with $z_{i}$ as otherwise there would be only one instead of three (or more) vertices between them. Say $y$ is not incident with $z_{i}$. Then both neighbors of $y$ on $R_i$ lie on $C_1$. But $y$ also has a connection $P_3$ of length at least three to $w$ that is internally disjoint from $C_1$. However, $P_3$ may only contain the edge $yz_i$, which is too short.
\end{proof}

We could directly apply Lemma~\ref{wall:lem:centralVertexIsBottleneck} to see that a $5$-brick wall $B_5$ cannot be embedded in $W - \{a, b\}$, because it would contain three different vertices that are a central vertex $x$ in a $B_3$, but there are only two bottleneck vertices in a layer of $W$. We will later see that we can even exclude a $4$-brick wall $B_4$, but for this we will need to see how a $3$-brick wall $B_3$ can be embedded in $W - \{a, b\}$.

\begin{lemma}
For all integers $r$ and $n$, a condensed wall without jump-edges $W^-$ of size $r + n$ contains $n$ edge-disjoint embeddings of $B_3$ in $W^- - \{a, b\}$, even with $r$ edges being deleted.
\end{lemma}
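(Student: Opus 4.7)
The plan is to place one $B_3$-subdivision into each of the $r+n$ layers of $W^-$. Since any two distinct layers of a condensed wall share only bottleneck vertices, their edge sets are disjoint; hence $B_3$-subdivisions contained in distinct layers are automatically edge-disjoint, and each deleted edge of $W^-$ belongs to exactly one layer. Thus, any $r$ deletions damage at most $r$ of these subdivisions, leaving at least $n$ untouched and giving $n$ edge-disjoint $B_3$s in $W^-\setminus\{a,b\}$. The entire argument reduces to exhibiting one $B_3$ inside a single layer.

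For the single-layer construction I would take as branch vertices $x = z_i$, $y = u^i_2$, $z = u^i_6$, $w = u^i_{10}$, and realise the six internally disjoint paths of an elementary $B_3$ as follows. The three length-one paths are the edges $xy = z_i u^i_2$, $xz = z_i u^i_6$, $xw = z_i u^i_{10}$. Two of the length-four paths walk along $P^i$: $yz = u^i_2 u^i_3 u^i_4 u^i_5 u^i_6$ and $zw = u^i_6 u^i_7 u^i_8 u^i_9 u^i_{10}$. The remaining long path $yw = u^i_2 u^i_1 z_{i-1} u^i_{11} u^i_{10}$ detours through the opposite bottleneck $z_{i-1}$. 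A direct check confirms that these six paths are internally vertex-disjoint, that every edge lies in layer $W_i$, and that the resulting graph is isomorphic to an elementary $B_3$, each of its three bricks being a 6-cycle of the form $1 + 4 + 1$. By Lemma~\ref{wall:lem:centralVertexIsBottleneck} the central vertex of any $B_3$ must anyway be a bottleneck, which matches the role of $z_i$ here.

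The main obstacle is the arithmetic fit: the $yw$-detour uses $u^i_{11}$, so the construction as stated requires $r+n \ge 6$. The finitely many smaller cases need a cosmetic variant (for instance, spanning two adjacent layers, or shifting $y,z,w$ and using $z_{i-1}$ on the other side). Beyond the single-layer construction, what remains is pure bookkeeping: checking edge-disjointness inside each layer, edge-disjointness across layers through shared bottleneck vertices, and counting the layers that remain undamaged after $r$ edge deletions.
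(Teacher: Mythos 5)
Your construction coincides with the paper's: it likewise places one $B_3$ per untouched layer (at least $n$ of the $r+n$ layers survive the deletions, and distinct layers are edge-disjoint), using the same single-layer embedding with the bottleneck vertex as the central branch vertex $x$, the other three branch vertices on the row, and the third long path detouring through the opposite bottleneck vertex — exactly Figure~\ref{fig:3BrickWallInLayer}. The only divergence is your caveat about $r+n\le 5$, which the paper silently ignores; note that your proposed fix of spanning two layers cannot work, since a $B_3$ is $2$-connected and the layers are the blocks of $W^--\{a,b\}$, but the lemma is only ever invoked for large size, so this degenerate regime is immaterial.
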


\begin{proof}
Obviously, there still exist $n$ complete layers of $W^-$. In each, we will place one $B_3$ as in Figure~\ref{fig:3BrickWallInLayer}, so they are all edge-disjoint because the layers of $W^-$ are.
\end{proof}

\begin{figure}[hbt] 
\centering
\begin{tikzpicture}[scale=1.2]
\tikzstyle{tinyvx}=[thick,circle,inner sep=0.cm, minimum size=1.5mm, fill=white, draw=black]

\def\vstep{2}
\def\hstep{0.5}
\def\hwidth{11}
\def\hheight{0}

\def\totalheight{\hheight*\vstep}
\def\totalwidth{\hwidth*\hstep}
\pgfmathtruncatemacro{\minustwo}{\hwidth-2}
\pgfmathtruncatemacro{\minusone}{\hwidth-1}

\foreach \j in {0,...,\hheight} {
	\draw[hedge] (0,\j*\vstep) -- (\hwidth*\hstep,\j*\vstep);
	\foreach \i in {0,...,\hwidth} {
		\node[tinyvx] (v\i\j) at (\i*\hstep,\j*\vstep){};
	}
}
\pgfmathtruncatemacro{\plusvone}{\hheight+1}

\foreach \j in {0,...,\plusvone}{
	\node[hvertex] (z\j) at (0.5*\hwidth*\hstep,-\j*\vstep+0.5*\vstep) {};
}



\foreach \j in {1,...,\plusvone}{
	\pgfmathtruncatemacro{\subone}{\j-1}
	\foreach \i in {1,3,...,\hwidth}{
		\draw[hedge] (z\j) to (v\i\subone);
	}
}

\foreach \j in {0,...,\hheight}{
	\foreach \i in {0,2,...,\hwidth}{
		\draw[hedge] (z\j) to (v\i\j);
	}
}


%

\begin{pgfonlayer}{foreground}
	\draw[red, hedge] (z0)  to (v20) ;
	\draw[red, hedge] (z0)  to (v60) ;
	\draw[red, hedge] (z0)  to (v100) ;
	\foreach \i in {1,...,\minusone}{
		\pgfmathtruncatemacro{\plusone}{\i+1}
		\draw[red, hedge] (v\i0) to (v\plusone0);
	}
	\draw[red, hedge] (z1)  to (v10) ;
	\draw[red, hedge] (z1)  to (v110) ;

\end{pgfonlayer}

\end{tikzpicture}
\caption{A $B_3$ in a layer of a modified condensed wall $W^-$ without jump-edges.}
\label{fig:3BrickWallInLayer}
\end{figure}
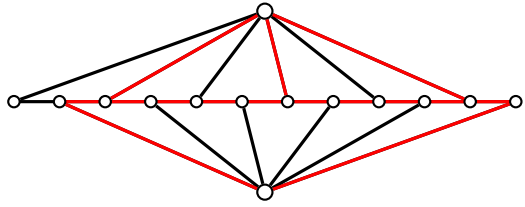

\begin{lemma} \label{wall:lem:noB4InSingleLayer}
There exists no $B_4$ in $W - \{a, b\}$.
\end{lemma}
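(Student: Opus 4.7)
The plan is a proof by contradiction: assume $\phi$ is an embedding of $B_4$ into $W - \{a, b\}$. I carry out the argument in four steps.

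First, I localise $\phi$. The elementary diamond of four hexagons has no cut vertex, so $B_4$ is $2$-connected, and $\phi(B_4)$ lies inside a single block of $W - \{a, b\}$. The cut vertices of $W - \{a, b\}$ are $z_1, \ldots, z_{r-1}$, and its blocks are exactly the subgraphs $B_j := W_j \cup \{z_{j-1} z_j\}$ for $j \in [r]$; fix such $j$.

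Second, I identify the two T-junctions of $B_4$. The elementary diamond has exactly two vertices belonging to three distinct bricks — the shared vertex $w$ of the top brick and the two middle bricks, and the shared vertex $x$ of the bottom brick and the two middle bricks. The upper and lower three-brick sub-walls each form a $B_3$ with central vertex $w$ and $x$ respectively, so by Lemma~\ref{wall:lem:centralVertexIsBottleneck} both $\phi(w)$ and $\phi(x)$ are bottleneck vertices of $W$. Since $B_j$ contains only the bottlenecks $z_{j-1}$ and $z_j$, I may assume $\phi(w) = z_{j-1}$ and $\phi(x) = z_j$.

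Third, I analyse the $\phi$-images $C^{top}, C^{bot}$ of the top- and bottom-brick cycles. In $B_4$ the top brick avoids $x$ and the bottom avoids $w$, and the two bricks are vertex-disjoint; hence $C^{top}$ avoids $z_j$, $C^{bot}$ avoids $z_{j-1}$, and $C^{top}, C^{bot}$ are vertex-disjoint. Now $B_j - z_j$ is a fan — the path $u^j_1, \ldots, u^j_{2r}$ with $z_{j-1}$ joined to every odd-indexed path vertex — so every cycle in it has the form $z_{j-1}, u^j_{2k-1}, u^j_{2k}, \ldots, u^j_{2l-1}, z_{j-1}$ for some $k \le l$. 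Symmetrically, $C^{bot}$ uses a subpath with even endpoints $2k' \le 2l'$; the two subpaths are disjoint, so WLOG $2l-1 < 2k'$.

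The fourth step is the main obstacle. Let $v_1, v_2$ be the non-T-junction branch vertices on the top brick and $v_3, v_4$ those on the bottom brick, labelled so that $v_1, v_4$ share a middle brick (and hence so do $v_2, v_3$). A local degree argument pins each $\phi(v_i)$ down: if $\phi(v_1) = u^j_n$ were strictly interior to $C^{top}$'s subpath (so $2k \le n \le 2l-2$), then all three edges of $u^j_n$ in $B_j$ would be used by $\phi$ — two as subpath edges of $C^{top}$, and the third (the bottleneck edge) as the first edge of $P_{v_1 v_4}$. But that first edge terminates at $z_{j-1}$ or $z_j$, i.e.\ at $\phi(w)$ or $\phi(x)$, another branch-vertex image; $P_{v_1 v_4}$ would have to traverse it internally, violating the embedding property. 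The analogous argument for $v_2, v_3, v_4$ yields $\{\phi(v_1), \phi(v_2)\} = \{u^j_{2k-1}, u^j_{2l-1}\}$ and $\{\phi(v_3), \phi(v_4)\} = \{u^j_{2k'}, u^j_{2l'}\}$. The same ``no bottleneck edge internally'' principle also forces $P_{v_1 v_4}$ and $P_{v_2 v_3}$ to be monotone subpaths of $u^j_1, \ldots, u^j_{2r}$. Whichever of $v_1, v_2$ maps to the left endpoint $u^j_{2k-1}$ launches its outside path leftwards; but its target lies in $\{\phi(v_3), \phi(v_4)\}$, strictly to the right — a contradiction.
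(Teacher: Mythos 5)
Your proof is correct, and it takes a noticeably different route from the paper's, although both hinge on Lemma~\ref{wall:lem:centralVertexIsBottleneck}. You apply that lemma \emph{twice} -- once to each of the two $B_3$-subwalls of the $B_4$ -- so that both degree-$3$ hubs are forced onto the only two bottleneck vertices $z_{j-1},z_j$ of the layer; you then finish with a direct geometric contradiction: the top and bottom brick cycles are fan cycles over disjoint intervals of the row, the four remaining branch vertices are pinned to the interval endpoints by the degree argument, and the outer path leaving the leftmost endpoint $u^j_{2k-1}$ is forced to run monotonely away from its target. The paper instead applies the lemma only once, classifies the (exactly two) ways a $B_3$ can sit in $W-\{a,b\}$ -- either $w,y,z$ all on the row $R_i$, or one of them equal to $z_i$ -- and checks in each case that $P_1,P_2,P_3$ become separated from the rest of the layer, so no fourth brick can be attached. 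Your version is tighter and more symmetric; the paper's buys something extra, namely the explicit classification of $B_3$-embeddings that is reused verbatim in the proof of Lemma~\ref{wall:lem:noUseOfB3InSingleLayer}, which your argument does not supply. Two cosmetic remarks: the layer $W_j$ is an induced subgraph already containing the jump-edge, so writing $W_j\cup\{z_{j-1}z_j\}$ is redundant; and it is worth saying explicitly that $u^j_1$ and $u^j_{2r}$ have degree~$2$ in $W-\{a,b\}$, which closes the boundary case of your final step when $2k-1=1$.
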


\begin{proof}
First, we focus on a fixed $B_3$ which is a subgraph of every $B_4$. By Lemma~\ref{wall:lem:centralVertexIsBottleneck}, we know that $x$ must be a bottleneck vertex, say $z_{i-1}$. As a wall is $2$-connected, we conclude that the rest of $B_3$ must be in the same layer $W_i$ of $W$ as $x$. Where are $w, y$ and $z$?

$w, y$ and $z$ could all lie on the row $R_i$ of $W_i$. The outer two (say $w$ and $y$) must then be connected via the path $P_3$ that uses $z_i$. This implies that $P_1$ and $P_2$ must be entirely contained in $R_i$. Now there is no connection between any two of $P_1, P_2$ and $P_3$ in $W_i - B_3$, which implies there can no $B_4$ in $W - \{a, b\}$ which contains a $B_3$ that is embedded as above.

The only alternative for a $B_3$ would be to use $z_i$ for one of $w, y$ and $z$, say $w$. But then $z$ and $y$ are still on $R_i$. As both bottleneck vertices are blocked, $P_1$ must be entirely contained in $R_i$, too. Now $y$ and $z$ separate $P_1, P_2$ and $P_3$ in $R_i$, so there cannot be any path connecting any two of them in $W - \{a, b\}$ without using $z_{i-1}$ or $z_i$, which are blocked by $x$ and $w$. Again, we conclude that there cannot be a $B_4$.
\end{proof}

\begin{lemma} \label{wall:lem:noUseOfB3InSingleLayer}
There exists no $B_4$ in $W^-$ that contains a $B_3$ in $W - \{a, b\}$. The same holds true for a $B_5$ in $W$.
\end{lemma}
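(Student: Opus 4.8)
I would refine the proof of Lemma~\ref{wall:lem:noB4InSingleLayer}. That proof already shows a $B_4$ cannot lie inside a single layer; the extra work here is to show that once the $B_3$ occupies both bottleneck vertices of its layer, the remaining brick(s) cannot ``escape'' the layer through $a$ and $b$ either. So suppose first that some $B_4$ in $W^-$ contains a $B_3$ in $W-\{a,b\}$. Being $2$-connected, the $B_3$ lies in a single block of $W-\{a,b\}$, i.e.\ in one layer $W_i$, and by Lemma~\ref{wall:lem:centralVertexIsBottleneck} its central vertex $x$ is a bottleneck vertex, say $x=z_{i-1}$. Reading the proof of Lemma~\ref{wall:lem:noB4InSingleLayer} a little more carefully, one sees that in each admissible position of this $B_3$ inside $W_i$ \emph{both} bottleneck vertices $z_{i-1}$ and $z_i$ lie on $B_3$, and the paths $P_1,P_2,P_3$ are pairwise separated inside $W_i$. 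Since $W_i-\{z_{i-1},z_i\}$ is exactly the path $R_i$, this yields a strong restriction: a path internally disjoint from $B_3$ that enters a vertex of $R_i$ can only continue along $R_i$ from there (every other edge at an $R_i$-vertex inside $W_i$ runs to $z_{i-1}$ or $z_i$), and it can leave $W_i$ only through $u^i_1$ or $u^i_{2r}$, the neighbours of $a$ and $b$.

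For the right labelling of the $B_3$, the $B_4$ is the $B_3$ together with one further brick attached at $w$: this brick meets $B_3$ in the two edges of $P_2$ and $P_3$ incident with $w$, and deleting these leaves a path $Q$ from the $P_3$-neighbour $w'$ of $w$ to the $P_2$-neighbour $w''$ of $w$, internally disjoint from $B_3$ (the symmetric labelling is treated identically). By Lemma~\ref{wall:lem:noB4InSingleLayer}, $Q$ is not contained in $W_i$, so it must leave and re-enter $W_i$, necessarily through $u^i_1$ and $u^i_{2r}$. Now there is a short case distinction on where $w$ sits. If $w$ lies on $R_i$, then $w'$ or $w''$ also lies on $R_i$, and a quick check shows all three of its neighbours in $W$ already belong to $B_3$; hence $Q$ cannot start from it, a contradiction. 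If $w=z_i$, then $w'$ and $w''$ are the two neighbours of $z_i$ on $P_3$ and $P_2$; from $w'$ the path $Q$ is forced along $R_i$ to an end, then through $a$, back through $b$, and along $R_i$ to $w''$, so it uses both ``corridors'' of $R_i$ outside the part occupied by $B_3$. But in $W^-$ there is no jump-edge, so the $x$–$w$ path from $z_{i-1}$ to $z_i$ is forced to pass through two consecutive vertices of $R_i$, and these lie on one of those two corridors, so $Q$ cannot be completed. Hence there is no $B_4$ in $W^-$.

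Next, suppose a $B_5$ in $W$ contains a $B_3$ in $W-\{a,b\}$; again the $B_3$ sits in one layer $W_i$ with the structure described above. Over this $B_3$ the $B_5$ has two further bricks, and after relabelling the $B_3$ at least one of them is attached at a vertex $w\ne x$ of the $B_3$ and yields a path $Q_1$ exactly as in the $B_4$ case. If $w\in R_i$ we are done as before, so $w=z_i$. We are now in $W$, so the $x$–$w$ path may be the jump-edge $z_{i-1}z_i$ and $Q_1$ is not outright impossible; but, as in the $B_4$ argument, $Q_1$ is still forced onto the detour through $a$ and $b$, after which $Q_1$ together with $B_3$ already covers \emph{every} vertex of $R_i$ and contains both $a$ and $b$. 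The second further brick contributes a path $Q_2$ that must be internally disjoint from $B_3\cup Q_1$ and joins two neighbours of one vertex of $B_3\cup Q_1$; since nothing of $W_i$ is left outside $B_3\cup Q_1$ and $a,b$ are used up, $Q_2$ has no interior vertex available, and it cannot be a single edge either — a contradiction. Hence there is no such $B_5$ in $W$.

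The delicate part is the middle step in both cases: running through all positions of the $B_3$ inside $W_i$ and verifying that the detour through $a$ and $b$ is genuinely blocked — for the $B_4$ by the forced extra contact of the $x$–$w$ path with $R_i$ in $W^-$, and for the $B_5$ by the observation that a single such detour already exhausts $R_i$.
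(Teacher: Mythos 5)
Your argument follows essentially the same route as the paper's proof: you re-use the two embedding cases from Lemma~\ref{wall:lem:noB4InSingleLayer} (the attachment vertex $w$ lies on $R_i$, or $w=z_i$), and in each case obstruct the additional brick by showing its new boundary path cannot leave and re-enter $W_i$ disjointly from $B_3$. Where the paper phrases Case~2 as a connection-counting statement (``only one path to a vertex outside $W_i$''), you phrase it as the path $Q$ being blocked along $R_i$ by the $R_i$-portion of the $x$--$w$ path; these are the same obstruction. One genuine improvement: the paper dispatches the $B_5$ part in a single unjustified clause, whereas you give an actual argument (after the forced $a$--$b$ detour, $B_3\cup Q_1$ exhausts $R_i$ and both terminals $a,b$, so the second extra path $Q_2$ has no room for its $\geq 3$ interior vertices). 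Two of your assertions are stated without proof but are correct and easy to supply: (i) that the $R_i$-portion of the $x$--$w$ path in $W^-$ sits between one of $w',w''$ and the corresponding end of $R_i$ (this is the planarity observation the paper records as ``$y$ and $z$ must be on the same side of it''), and (ii) that the $B_3$-occupied part of $R_i$ is a single contiguous segment with $w'$ and $w''$ at its two ends, which is what makes the ``covers every vertex of $R_i$'' claim go through. Also note that your Case~1 phrasing should single out the endpoint of $Q$ that lies on a path entirely contained in $R_i$ (one of $P_1,P_2$), since the endpoint on $P_3$ might be $z_i$ and then has free edges; this is a matter of wording, the intended argument is fine.
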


\begin{proof}
In the proof of Lemma~\ref{wall:lem:noB4InSingleLayer}, we have seen that there are only two possibilities to embed a $B_3$ in $W - \{a, b\}$. Let us start with the first one, i. e. $w, y$ and $z$ are all on the same row $R_i$, and $P_3$ connects $w$ and $y$ via $z_i$. Then there is no path between $P_1$ or $P_2$ and a vertex outside of $R_i$ that does not cross $B_3$, which implies $W$ cannot contain a $B_4$.

So let us assume $w$ is identical to $z_i$ instead. In $W^- - \{a, b\}$, the subdivision of edge~$1$ crosses $R_i$, so both $y$ and $z$ must be on the same side of it. But then $B_3$ can have only one path connecting it to a vertex outside of $W_i$ via either $u^i_1$ or $u^i_{2r}$, but two such connections would be needed to form an additional cycle and thus a $B_4$. In $W$ (instead of $W^-$), there can be both connections, but this only enables a $B_4$ and not a $B_5$.
\end{proof}

The last lemma shows that no large wall can contain a $B_3$ in $W - \{a, b\}$. Next, we will see what happens for a $B_2$.

\begin{lemma} \label{wall:lem:B2UsesBottleneckVertex}
Every $B_2$ in $W - \{a, b\}$ that is part of a $B_3$ in $W$ contains a bottleneck vertex as a vertex of degree~$3$.
\end{lemma}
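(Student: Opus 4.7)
The plan rests on a structural classification. A quick length analysis shows that the two degree-$3$ vertices of any $B_2 \subseteq B_3$ must lie in the degree-$3$ vertex set $\{x,w,y,z\}$ of $B_3$. Comparing, for each candidate pole-pair, the three internally disjoint $B_3$-paths between them against the required profile $(\geq 1, \geq 5, \geq 5)$ of a subdivided $B_2$, one finds that pole-pairs containing $x$ always yield valid $B_2$'s (with shared edge $xv$ for some $v \in \{w,y,z\}$), while a pole-pair like $(w,y)$ yields a valid $B_2$ only when the opposite $B_3$-path $P_3$ happens to be subdivided to length $\geq 5$.

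In the first case, where $x$ is a pole of $B_2$, I would reduce to Lemma~\ref{wall:lem:centralVertexIsBottleneck}. First note $x \in B_2 \subseteq W-\{a,b\}$ forces $x \neq a,b$; since $B_2$ uses all three edges at $x$, $x$ has degree three in $W-\{a,b\}$, ruling out $x \in \{u^i_1, u^i_{2r}\}$. Hence $x$ is either a bottleneck (and we are done) or some $u^i_k$ with $2 \leq k \leq 2r-1$. Suppose the latter. If $B_3 \subseteq W-\{a,b\}$, Lemma~\ref{wall:lem:centralVertexIsBottleneck} applied to $B_3$ gives an immediate contradiction. Otherwise $B_3$ uses $a$ or $b$; a short check rules out such a vertex being a degree-$3$ vertex of $B_3$ or lying on any subdivided edge $xw, xy, xz$, so it must lie in the interior of the boundary path of the unique brick $C_3 \not\subseteq B_2$. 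I would then re-run the proof of Lemma~\ref{wall:lem:centralVertexIsBottleneck} after relabeling $\{w,y,z\}$ so that the cycle "$C_1$" used there is one of the two bricks of $B_2$, and hence lies in $W-\{a,b\}$. In the residual sub-case, where this relabeling is blocked because the bottleneck $\zeta$ adjacent to $x$ lies on the shared $B_2$-path from $x$ to the other pole $v$, either $v = \zeta$ (so $v$ is a bottleneck, finishing the proof) or a direct length argument along the lines of the original proof gives the contradiction.

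In the second case, both poles of $B_2$ lie in $\{w,y,z\}$ and $x$ is an interior vertex of one of $B_2$'s paths. Here I would first apply Lemma~\ref{wall:lem:centralVertexIsBottleneck} to $B_3$ to deduce that $x$ is a bottleneck (again splitting on whether $B_3$ uses $a$ or $b$, as in the first case), and then use the length constraints on the two length-$\geq 5$ outer paths of $B_2$ together with the layer structure of $W$ — in which paths between layers can only cross through bottleneck vertices — to conclude that at least one pole of $B_2$ must itself be a bottleneck.

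The main obstacle I anticipate is the residual sub-case of the first case and the pole-analysis in the second case: both require a careful local accounting of how the internally disjoint paths of $B_2$ can traverse $W$ around $x$ and $\zeta$, rather than a black-box use of Lemma~\ref{wall:lem:centralVertexIsBottleneck}.
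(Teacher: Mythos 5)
Your overall route differs from the paper's: you classify the $B_2$ by the position of its two branch vertices among the four branch vertices $\{x,w,y,z\}$ of the ambient $B_3$ and then try to reduce to Lemma~\ref{wall:lem:centralVertexIsBottleneck}, whereas the paper never invokes that lemma's proof machinery again; it labels the $B_2$ itself (shared path $Q$, outer paths $P_1,P_2$, extension path $P_3$), assumes both branch vertices are non-bottleneck row vertices, and splits on whether $Q$ meets a bottleneck, deriving in each case that the extension to a $B_3$ is impossible. Your classification is correct as far as it goes, but the proposal has a genuine gap in its second case.

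In your Case~B (both poles of $B_2$ among $\{w,y,z\}$, with $x$ interior to one of the three $B_2$-paths), the conclusion you aim for is simply false, so no length argument can close it. Concretely, take $r\ge 8$ and work in one layer $W_i$: let the poles be $u^i_4$ and $u^i_{13}$, joined by the row segment $u^i_4u^i_5\cdots u^i_{13}$ (length $9$), by $u^i_4\,z_i\,u^i_{16}\,u^i_{15}\,u^i_{14}\,u^i_{13}$ (length $5$), and by $u^i_4\,u^i_3\,u^i_2\,u^i_1\,z_{i-1}\,u^i_{13}$ (length $5$). This theta graph is a subdivision of the elementary $B_2$ lying in $W-\{a,b\}$, neither pole is a bottleneck, and adding the single edge $z_iu^i_8$ yields a subdivision of the elementary $B_3$ with central vertex $z_i$ (spoke lengths $1,1,4$; outer lengths $4,5,5$). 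So a ``$B_2$ that is a subgraph of a $B_3$'' in the literal sense need not have a bottleneck pole. The paper implicitly avoids this by reading ``$B_2$ part of a $B_3$'' as ``two of the three bricks of the $B_3$'' (this is exactly the setup of its Figure~\ref{fig:2BrickWallLabels}, where $P_3$ joins interior vertices of $P_1$ and $P_2$); under that reading the shared path $Q$ is a spoke of the $B_3$, the central vertex is forced to be a pole of the $B_2$, and only your Case~A occurs. You must either adopt that reading explicitly (making Case~B vacuous) or accept that the statement fails for Case~B configurations; as written, your Case~B claims to prove something refutable.

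Within Case~A there are two further soft spots. First, re-running the proof of Lemma~\ref{wall:lem:centralVertexIsBottleneck} after relabelling so that its brick $C_1$ lies in $B_2$ fixes the step ``$C_1$ must contain $z_{i-1}$'', but the final step of that proof also uses the outer $wy$-path $P_3$ of the $B_3$, and that is precisely the path on which $a$ or $b$ may sit; with $a$ or $b$ available, ``$P_3$ may only contain the edge $yz_i$'' no longer follows, so this needs a separate argument. Second, your residual sub-case (the bottleneck adjacent to $x$ interior to $Q$) is exactly the paper's Case~2, and the contradiction there does not come from path lengths alone: a $B_2$ with both poles on the row genuinely exists in a layer, and what fails is only the attachment of the third brick. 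Your sketch should make explicit that the $B_3$-extension, not the $B_2$ itself, is what is being contradicted.
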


\begin{proof}
As $B_2$ is $2$-connected, $B_2$ must be contained in a single layer $W_i$ of $W$.
We start by labeling our $B_2$ as in Figure~\ref{fig:2BrickWallLabels}. Let $C_1$ and $C_2$ be the two bricks of $B_2$, and let $x$ and $y$ be the vertices of degree~$3$. Let $Q$ be the $xy$-path belonging to both $C_1$ and $C_2$, and let $P_1$ be the $xy$-path which is unique to $C_1$, while $P_2$ is the $xy$-path that lies only in $C_2$. Finally, let $P_3$ be the path in $B_3 - B_2$ connecting $P_1$ and $P_2$. Suppose $x$ and $y$ would both not be a bottleneck vertex. Then $x$ and $y$ lie both on the row $R_i$. Additionally, by Lemma~\ref{wall:lem:centralVertexIsBottleneck}, the $B_3$ cannot be entirely contained in $W - \{a,b\}$.

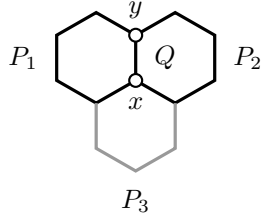
\begin{figure}[hbt] 
\centering
\begin{tikzpicture}[scale=0.6]
\tikzstyle{tinyvx}=[thick,circle,inner sep=0.cm, minimum size=1.5mm, fill=white, draw=black]

\def\side{1}
\def\height{0.5} 
\def\width{1.732} 
\def\halfwidth{0.5*\width}
\def\heightplusside{\height+\side}

\draw[hedge] (0,0) -- (0,\side) -- (\halfwidth,\heightplusside) -- (\width,\side) -- (\width,0) -- (\halfwidth,-\height) -- (0,0);

\draw[hedge] (0,\side) -- (-\halfwidth,\heightplusside) -- (-\width,\side) -- (-\width,0) -- (-\halfwidth,-\height) -- (0,0);

\draw[grau, hedge] (-\halfwidth,-\height) -- (-\halfwidth,-\height-\side) -- (0,-\height-\side-\height) -- (\halfwidth,-\height-\side) -- (\halfwidth,-\height);

\node[smallvx, label=below:$x$] (x) at (0,0) {};
\node[smallvx, label=above:$y$] (y) at (0,\side) {};
\node[label=right:$Q$] at (0,0.5*\side) {};
\node[label=below:$P_3$] at (0,-2*\height-\side) {};
\node[label=left:$P_1$] at (-\width,0.5*\side) {};
\node[label=right:$P_2$] at (\width,0.5*\side) {};

\end{tikzpicture}
\caption{A $B_2$ with labels. The grey path $P_3$ belongs to the $B_3$.}
\label{fig:2BrickWallLabels}
\end{figure}

\vspace{1ex}
\noindent\textbf{Case~1: $Q$ contains no bottleneck vertex}

Suppose $Q$ would not contain a bottleneck vertex. Then $Q$ is entirely contained in $R_i$. As $x$ and $y$ have degree~$3$ in $B_2$, as is the maximum degree in $R_i$, every edge incident with $x$ or $y$ in $W$ must also be used for $B_2$. In particular, the paths $P_1$ and $P_2$ use the edges connecting $x$ and $y$ with $z_{i-1}$ and $z_i$. From those bottleneck vertices, $P_1$ and $P_2$ must use an edge to $R_i$, and from there on they can only contain a path connecting them to $x$ or $y$ on $R_i$.
Now the only possibilities to connect $P_1$ or $P_2$ to a vertex outside of $W_i$ would be to use the bottleneck vertices, which are the first vertex on $P_1$ after $x$ and $y$, or the very next vertex which they hit on the row, which comes second on $P_1$ and $P_2$. However, none of $P_1$ and $P_2$ can have a connection via its third (or a later) vertex. Note that we counted the vertices for both $P_1$ and $P_2$ in the same order (e. g. clockwise).

When looking at how a $B_2$ can be extended to a $B_3$, we notice that the first branch vertex of $P_1$ must be connected with the third branch vertex of $P_2$, or vice versa. This is impossible as we observed above, a contradiction.

\vspace{1ex}
\noindent\textbf{Case~2: $Q$ contains a bottleneck vertex}

Suppose $Q$ would contain at least one bottleneck vertex, say $z_i$. As we required $x$ and $y$ not to be a bottleneck vertex, we conclude that $z_i$ is an interior vertex of $Q$. Now $P_1$ and $P_2$ must connect $x$ and $y$ in $W_i$ without using $z_i$. Therefore, one of them (say $P_1$) must be entirely contained in $R_i$. Furthermore, $P_2$ and $Q$ separate  $P_1$ from $W - W_i$. This implies $B_2$ cannot be extended to a $B_3$, a contradiction.
\end{proof}

\begin{lemma} \label{wall:lem:atMost3ConnectionsForB2}
Let $G$ be a subcubic subgraph of a condensed wall $W$ that contains a $B_2$ in $W - \{a, b\}$ (which implies $B_2$ is contained in a single layer $W_i$ of $W$). Then there are at~most~$3$ disjoint paths in $G$ connecting $B_2$ to $W - W_i$, and at most one of them may use a bottleneck vertex of $W_i$.
\end{lemma}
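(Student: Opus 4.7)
The plan is to exploit the fact that, in a condensed wall, only four vertices of a single layer $W_i$ touch the complement $W-W_i$: the two row-ends $u^i_1, u^i_{2r}$ (which carry the edges to $a$, respectively $b$) and the two bottlenecks $z_{i-1}, z_i$ (which carry the edges to the neighbouring layers and the jump-edges). Every path in $G$ from $V(B_2)$ to $V(W-W_i)$ must therefore use one of these four ``exit vertices'', and since the paths are pairwise disjoint, they must use distinct exit vertices. This already gives an a priori bound of four; the entire argument is about sharpening this to three and controlling the bottleneck contribution.

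First I locate $B_2$ in $W_i$. Since $B_2$ is a theta-graph (two branch vertices joined by three internally disjoint paths of lengths at least $1,5,5$), and since $W_i-\{z_{i-1},z_i\}=R_i$ is just a path, $B_2$ must contain at least one bottleneck. Moreover, I will invoke the structural dichotomy from the proof of Lemma~\ref{wall:lem:B2UsesBottleneckVertex}: either (A) a bottleneck is a branch vertex of $B_2$, or (C) both branch vertices $x,y$ of $B_2$ lie on the row $R_i$ (with opposite parities) and both bottlenecks occur as subdivision vertices of $B_2$, one on $P_1$ and one on $P_2$, while $Q\subseteq R_i$.

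The subcubicity of $G$ then restricts how many ``free'' $G$-edges sit at each bottleneck, i.e.\ edges of $G$ at $z_{i-1}$ or $z_i$ that do not belong to $B_2$. A branch vertex of $B_2$ has all three of its $G$-edges absorbed by $B_2\subseteq W_i$, so it contributes zero exit edges. A subdivision vertex of $B_2$ has exactly one free $G$-slot. Since disjoint paths cannot share a vertex, at most one path can exit through any given bottleneck. In case~(A) this already gives zero exits through the branch-bottleneck and at most one through the other one, for a total of at most $1+2=3$. The remaining case to kill is~(C), where each bottleneck has one free slot.

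The main obstacle is Step~(C): I need to show that these two free slots cannot simultaneously be exit edges. The plan is to trace exactly which edges $B_2$ must consume at $z_{i-1}$ and $z_i$. In case~(C), the proof of Lemma~\ref{wall:lem:B2UsesBottleneckVertex} forces $P_1$ to enter and leave $z_{i-1}$ via the edge $xz_{i-1}$ and an edge to a further odd row vertex $u^i_{k'}$, and symmetrically $P_2$ uses $yz_i$ and $z_iu^i_{k''}$. Subcubicity at $x,y,z_{i-1},z_i$ together with the row-vertex constraints then pins the third slot at each bottleneck: I expect to show that the free slot at $z_{i-1}$ must be an edge staying inside $W_i$ (e.g.\ the jump edge $z_{i-1}z_i$ or an odd-row edge whose endpoint is already saturated), so it cannot leave $W_i$; the symmetric argument at $z_i$ then rules out both bottleneck exits simultaneously. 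Combined with the at most two non-bottleneck exits through $u^i_1$ and $u^i_{2r}$, this yields the claimed bound of three disjoint paths with at most one using a bottleneck; closing this last degree-counting argument in case~(C) is the delicate part I anticipate.
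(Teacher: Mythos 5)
Your setup (four gateway vertices $u^i_1,u^i_{2r},z_{i-1},z_i$, so an a priori bound of four disjoint exits) is fine, and your case (A) matches the paper: if a bottleneck is a branch vertex of $B_2$, subcubicity absorbs all three of its $G$-edges and that gateway is dead, giving at most $3$ exits with at most one through the remaining bottleneck. The problem is your case (C). The step you yourself flag as delicate --- showing that in case (C) the free slot at each bottleneck must stay inside $W_i$ --- is not just delicate, it is false. If $z_{i-1}$ is an interior (subdivision) vertex of $P_1$, its two $B_2$-edges go to odd row-$i$ vertices, and its one remaining $G$-slot may perfectly well be the jump-edge $z_{i-2}z_{i-1}$ or an edge to a vertex of row $i-1$; both leave $W_i$. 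Concretely, take $x=u^i_{10}$, $y=u^i_{21}$, $Q$ the row segment between them, $P_1 = u^i_{10}\cdots u^i_5\,z_{i-1}\,u^i_{21}$, $P_2=u^i_{10}\,z_i\,u^i_{26}\cdots u^i_{21}$; adding the edge $z_{i-1}z_{i-2}$, the edge $z_iz_{i+1}$, and the two row paths from $u^i_5$ to $a$ and from $u^i_{26}$ to $b$ yields a subcubic $G$ with \emph{four} disjoint exits, two of them through bottlenecks. So no degree-counting argument can close case (C); the configuration genuinely exists.

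What rescues the lemma is a hypothesis you dropped: the paper's proof consists of invoking Lemma~\ref{wall:lem:B2UsesBottleneckVertex}, whose conclusion is precisely that case (C) (and the further case you omit from your dichotomy, a bottleneck interior to $Q$) cannot occur --- but that lemma needs the extra assumption that the $B_2$ extends to a $B_3$, and it is the extension path $P_3$ that produces the contradiction killing your case (C). In other words, Lemma~\ref{wall:lem:atMost3ConnectionsForB2} is only true for the $B_2$'s arising in its applications (those sitting inside a larger wall piece), and the correct proof is the paper's one-liner: by Lemma~\ref{wall:lem:B2UsesBottleneckVertex} some bottleneck is a degree-$3$ vertex of $B_2$, subcubicity forbids any exit there, and only the other bottleneck and the two row-ends toward $a$ and $b$ remain. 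You need to import that lemma's conclusion (and hence its hypothesis) rather than re-derive the structure of $B_2$ from scratch.
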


\begin{proof}
By Lemma~\ref{wall:lem:B2UsesBottleneckVertex}, we know that $B_2$ must already contain one of the bottleneck vertex as a vertex of degree~$3$. As $G$ is subcubic, this vertex cannot be used for a connection of $B_2$ to $W - W_i$. This leaves only one bottleneck vertex and $a$ and $b$ for such connections.
\end{proof}

\begin{defin} 
A $B_1^2$ is the union of two disjoint $B_1$ ($H_1$, $H_2$), and two disjoint paths ($P_1, P_2$) each connecting $H_1$ with $H_2$, i.e. $B_1^2 := H_1 \cup H_2 \cup P_1 \cup P_2$.
\end{defin}

\begin{lemma} \label{wall:lem:noB7inCondensedWall}
In a $B_7$, let $Q$ be a disjoint (and possibly empty) union of paths $P$ in $B_7$, such that all interior vertices of $P$ have degree at~most~$2$ in $B_7$, and let $P$ have only bottleneck vertices and at least one of $a$ and $b$ as endvertices. Then there is no embedding of $B_7 - Q$ in a condensed wall $W$.
\end{lemma}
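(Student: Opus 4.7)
The proof proposal proceeds by contradiction: suppose an embedding $\iota$ of $B_7 - Q$ into $W$ exists that respects the endpoint constraints on $Q$, and identify $B_7 - Q$ with its image in $W$.

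The core idea is to exhibit inside $B_7 - Q$ a $B_3$-subgraph $H$ contained in a $B_5$-subgraph $H'$, with the vertex set of $H$ lying entirely in $W - \{a, b\}$. Once such a pair $(H, H')$ is found, the second half of Lemma~\ref{wall:lem:noUseOfB3InSingleLayer} — no $B_5$ in $W$ contains a $B_3$ embedded in $W - \{a, b\}$ — immediately delivers the contradiction.

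To locate such an $(H, H')$, I would rely on two observations. First, since the embedding is injective and $B_7 - Q$ is subcubic, at most two branch vertices of $B_7 - Q$ can map to $\{a, b\}$. Second, each $P \in Q$ has its interior confined to degree-$2$ vertices of $B_7$ and at least one endpoint mapping to $\{a, b\}$; so $Q$ is a union of ``spokes'' of $B_7$ attached to the (at most two) preimages of $a$ and $b$. The seven-brick structure of $B_7$ supplies abundant $B_5$-subgraphs each containing a $B_3$-substructure, and an inspection of Figure~\ref{fig:B_n_definition} shows that regardless of which branch vertices of $B_7$ play the roles of $a$ and $b$ and which incident brick-edges are erased by $Q$, at least one such $(H, H')$ survives with $V(H)$ disjoint from those two branch vertices — equivalently $H \subseteq W - \{a, b\}$ in the embedding.

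The main obstacle is precisely the case analysis implicit in the previous paragraph: one must enumerate, up to the symmetries of $B_7$, the possible positions of the preimages of $a$ and $b$ among the branch vertices of $B_7$ together with the configurations of the (at most six) paths in $Q$ attached to them, and for each case exhibit an explicit $(H, H')$ pair in $B_7 - Q$. Although the symmetry of $B_7$ cuts the number of essentially different configurations down considerably, extremal cases in which the preimages of $a$ and $b$ sit close together (for example, sharing a brick or being connected by a single $P \in Q$) are the most delicate: several candidate $B_3$'s may be simultaneously disturbed, and one has to argue that at least one $B_5 \supseteq B_3$ configuration still remains in an ``untouched'' region of $B_7 - Q$, exploiting that $B_7$ contains many disjoint $B_3$'s whose central vertices are spread across the figure.
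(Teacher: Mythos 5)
Your reduction to Lemma~\ref{wall:lem:noUseOfB3InSingleLayer} has a genuine gap: the pair $(H,H')$ you need does not always exist. Every $B_3$ in a $B_7$ consists of the central brick together with two consecutive outer bricks: these are the only three pairwise adjacent bricks sharing a common vertex $x$, and no subdivided $K_4$ of $B_7$ can avoid the central cycle, since after deleting the vertices of the central brick only the outer cycle with pendant spoke-paths remains. Consequently, if the preimage of $a$ or of $b$ lies on the central brick, then \emph{every} $B_3$ of $B_7 - Q$ meets $\{a,b\}$, and no $B_3$ embedded in $W - \{a,b\}$ can be exhibited. This configuration is not exotic: it is exactly how the paper itself embeds most of a $B_7$ into $W$ in Figures~\ref{fig:7BrickWallEmbedding} and~\ref{fig:7BrickWallInCondWall}, where $a$, $b$, $c$, $d$ all sit on the central brick. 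Your case analysis, which ranges only over placements of $a$ and $b$ at branch vertices and over which spokes $Q$ deletes, never confronts this case, and in it your strategy cannot be completed. (A smaller inaccuracy: the preimages of $a$ and $b$ need not be branch vertices of $B_7$ at all; they can be subdivision vertices anywhere, in particular on the central brick.)

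The paper's proof avoids this trap by working one level down. Using Lemma~\ref{wall:lem:atMost3ConnectionsForB2} it first rules out any $B_2$ of $B_7 - Q$ lying in $W - \{a,b\}$, since each such $B_2$ has four disjoint connections to the rest of $B_7$ but only three are available. It then finds two outer bricks $H_1, H_2$ avoiding $a$, $b$ and $Q$, argues that they lie on opposite sides of $B_7$ and are still joined by two disjoint paths, and so obtains a $2$-connected $B_1^2$ in $W - \{a,b\}$; this must lie in a single layer yet would require six connecting vertices, whereas a layer offers at most four. If you want to salvage your route, you would have to supplement it with an argument of this kind for the central-brick case, at which point you would essentially have reproduced the paper's proof.
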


\begin{proof}
Suppose $B_7 - Q$ would be in $W$. First, we notice that every $B_2$ in $B_7 - Q$ has at least four disjoint connections to $B_7 - B_2$. Therefore, we conclude with Lemma~\ref{wall:lem:atMost3ConnectionsForB2} that no $B_2$ can be contained in $W - \{a, b\}$.

Next, we observe that each of $a$ and $b$ can be only situated on at most two of the six outer bricks of $B_7$. This implies $B_7 - \{a, b\} - Q$ contains at least two disjoint $B_1$, call them $H_1$ and $H_2$. Additionally, we can conclude that $H_1$ and $H_2$ were on opposite sides of $B_7$, otherwise we would get a $B_2$ in $W - \{a, b\}$. Furthermore, $H_1$ and $H_2$ were connected by four disjoint paths in $B_7$, meaning there are still at least two disjoint paths $P_1$, $P_2$ connecting $H_1$ and $H_2$ in $B_7 - \{a, b\} - Q$. (Note that all paths in $Q$ begin or end in either $a$ or $b$.) 

Now $U = H_1 \cup H_2 \cup P_1 \cup P_2$ is a $B_1^2$ in $W - \{a, b\}$. First, we notice that $U$ is $2$-connected, which implies that $U$ is contained in a single layer $W_i$ of $W$. Next, we observe that there are $6$~connections of $U$ to $a$ and $b$ in $B_7$. Therefore, $U$ must have $6$ connecting vertices (i.e. bottleneck vertices or vertices adjacent to $a$ or $b$). However, there can be at most $4$ connecting vertices, a contradiction.
\end{proof}

\begin{lemma} \label{wall:lem:manyB6inCondensedWall}
For every $n, r \in\N$, in every condensed wall without jump-edges $W^-$ of size $3 \cdot (n + r)$, there are $n$~edge-disjoint $B_6$ in $W^-$, even with $r$ edges of $W^-$ being deleted.
\end{lemma}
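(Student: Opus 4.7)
The plan is to follow the same scheme as the preceding edge-disjoint $B_3$ lemma, but using triples of consecutive layers instead of single layers. I would partition the $3(n+r)$ layers of $W^-$ into $n+r$ consecutive triples $L_k = W_{3k-2} \cup W_{3k-1} \cup W_{3k}$ for $k \in [n+r]$, and exhibit an embedding of $B_6$ in each $L_k$ (allowing use of the edges $a u^j_1$ and $b u^j_{2r}$ for $j \in \{3k-2, 3k-1, 3k\}$). Since $W^-$ has no jump-edges, distinct $L_k$'s share only bottleneck vertices, and the edges incident with $a$ or $b$ belong to the unique triple containing their row. Consequently the triples, together with their associated $a$- and $b$-edges, form an edge-disjoint decomposition of the relevant part of $W^-$, and the resulting $n+r$ embeddings of $B_6$ are pairwise edge-disjoint.

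Given such embeddings, the deletion argument is immediate: each of the $r$ deleted edges lies in at most one $L_k$, so at most $r$ of the $n+r$ embedded $B_6$'s are destroyed, leaving at least $n$ intact, as required.

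The one non-trivial step is the construction of a $B_6$ inside a triple. I would start with a $B_3$ in the middle layer $W_{3k-1}$ as in Figure~\ref{fig:3BrickWallInLayer}, using the bottleneck vertex $z_{3k-1}$ as the central vertex $x$ of Figure~\ref{fig:3BrickWallLabels} and three row vertices as $w, y, z$. The three bricks of this inner $B_3$ serve as three of the six bricks of the target $B_6$, meeting at the interior vertex corresponding to $z_{3k-1}$. The remaining three bricks of $B_6$ are attached, one along each of the boundary edges of the $B_3$ incident with $w, y, z$, and routed through the two other bottleneck vertices $z_{3k-2}$ and $z_{3k}$ and the row edges of $W_{3k-2}$ and $W_{3k}$, closing via $a$ or $b$ when necessary. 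The row length $6(n+r)$ of each layer leaves ample room to keep the routings edge-disjoint.

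The main obstacle is making this last construction precise enough to verify that the resulting subgraph is genuinely a subdivision of the elementary $B_6$, so that no spurious degree-$3$ vertices are created and all seven brick adjacencies of $B_6$ are realised. This is a purely combinatorial bookkeeping task and is most transparently settled by a single labelled figure analogous to Figure~\ref{fig:3BrickWallInLayer}; once that embedding is fixed, the partition argument above completes the proof.
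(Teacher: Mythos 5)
Your outer layer of the argument---partitioning $W^-$ into $n+r$ edge-disjoint triples of consecutive layers, embedding one $B_6$ per triple, and observing that $r$ deleted edges can destroy at most $r$ of the resulting embeddings---is exactly the paper's argument and is fine. The problem is the step you defer as ``purely combinatorial bookkeeping'': the construction of a $B_6$ inside a triple. Your proposed embedding starts from a $B_3$ lying entirely in the middle layer with the bottleneck vertex $z_{3k-1}$ as its central vertex $x$ and three row vertices as $w,y,z$; this is a $B_3$ in $W-\{a,b\}$, and by Lemma~\ref{wall:lem:noUseOfB3InSingleLayer} no $B_4$ in $W^-$ contains such a $B_3$ --- so a fortiori no $B_6$ does. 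Concretely, the proofs of Lemmas~\ref{wall:lem:noB4InSingleLayer} and~\ref{wall:lem:noUseOfB3InSingleLayer} show that once $w,y,z$ all sit on the row $R_i$, the paths $P_1,P_2,P_3$ are separated from each other and from everything outside $R_i$ by the $B_3$ itself, so none of the three additional bricks you want to attach along $P_1,P_2,P_3$ can be closed up. The difficulty is not bookkeeping; the intended embedding does not exist.

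The paper's embedding (Figures~\ref{fig:6BrickWall} and~\ref{fig:6BrickWallInCondWall}) is structurally different and is designed precisely to dodge this obstruction: it places one $B_2$ in the first layer of the triple and one $B_2$ in the third layer (each using a bottleneck vertex as a degree-$3$ branch vertex, as forced by Lemma~\ref{wall:lem:B2UsesBottleneckVertex}, and each having only the three connections permitted by Lemma~\ref{wall:lem:atMost3ConnectionsForB2}), and joins them by three disjoint paths --- one through $a$, one through $b$, and one through the middle layer via its bottleneck vertices. The two $B_2$'s plus the two faces created by the three connecting paths give the six bricks. In particular, no layer carries more than a $B_2$ of the embedding, which is the maximum the earlier lemmas allow. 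To repair your proof you would need to replace your $B_3$-centred construction by this (or an equivalent) two-$B_2$-plus-three-paths layout.
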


\begin{proof}
Even with $r$ edges of $W^-$ being deleted, there are still at least $n$ untouched edge-disjoint chunks of $W^-$ that each contain three consecutive layers of $W^-$ and which all have an edge to both $a$ and $b$. In each of them, we can find a $B_6$ as in Figure~\ref{fig:6BrickWallInCondWall}.
\end{proof}

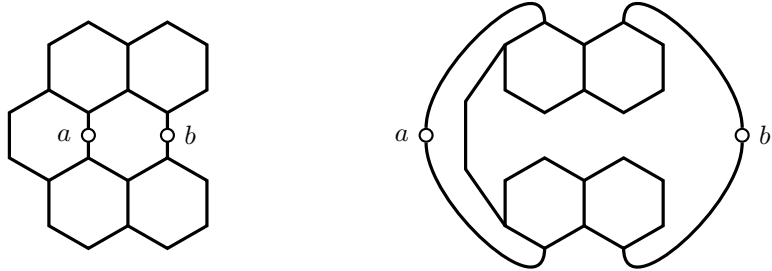
\begin{figure}[hbt] 
\centering
\begin{tikzpicture}[scale=0.6]
\tikzstyle{tinyvx}=[thick,circle,inner sep=0.cm, minimum size=1.5mm, fill=white, draw=black]

\def\side{1}
\def\height{0.5} 
\def\width{1.732} 
\def\halfwidth{0.5*\width}
\def\heightplusside{\height+\side}

\def\offset{5*\side}

\begin{scope}[shift={(-\offset,0)}]

	\draw[hedge] (0,0) -- (0,\side) -- (\halfwidth,\heightplusside) -- (\width,\side) -- (\width,0) -- (\halfwidth,-\height) -- (0,0);
	
	\draw[hedge] (0,\side) -- (-\halfwidth,\heightplusside) -- (-\width,\side) -- (-\width,0) -- (-\halfwidth,-\height) -- (0,0);
	
	\draw[hedge] (-\halfwidth,-\height) -- (-\halfwidth,-\height-\side) -- (0,-\height-\side-\height) -- (\halfwidth,-\height-\side) -- (\halfwidth,-\height);
	
	\draw[hedge] (-\width, 0) -- (-1.5*\width, -\height) -- (-1.5*\width, -\height-\side) -- (-\width, -2*\height-\side) -- (-\halfwidth, -\height-\side);
	
	
	\draw[hedge] (-\width, -2*\height-\side) -- (-\width, -2*\height-2*\side) -- (-\halfwidth, -3*\height-2*\side) -- (0, -2*\height-2*\side) -- (0, -2*\height-\side);
	
	\draw[hedge] (\halfwidth, -\height-\side) -- (\width, -2*\height-\side) -- (\width, -2*\height-2*\side) -- (\halfwidth, -3*\height-2*\side) -- (0, -2*\height-2*\side);
	
	\node[smallvx, label=left:$a$] (a) at (-\halfwidth,-\height-0.5*\side) {};
	\node[smallvx, label=right:$b$] (b) at (\halfwidth,-\height-0.5*\side) {};

\end{scope}

\begin{scope}[shift={(\offset,0)}]

	\draw[hedge] (0,0) -- (0,\side) -- (\halfwidth,\heightplusside) -- (\width,\side) -- (\width,0) -- (\halfwidth,-\height) -- (0,0);
	\draw[hedge] (0,\side) -- (-\halfwidth,\heightplusside) -- (-\width,\side) -- (-\width,0) -- (-\halfwidth,-\height) -- (0,0);
	
	\begin{scope}[shift={(0,-3*\side)}]
		\draw[hedge] (0,0) -- (0,\side) -- (\halfwidth,\heightplusside) -- (\width,\side) -- (\width,0) -- (\halfwidth,-\height) -- (0,0);
		\draw[hedge] (0,\side) -- (-\halfwidth,\heightplusside) -- (-\width,\side) -- (-\width,0) -- (-\halfwidth,-\height) -- (0,0);
	\end{scope}
	
	\draw[hedge] (-\width,\side) -- (-1.5*\width, -0.25*\side) -- (-1.5*\width, -1.75*\side) -- (-\width,-3*\side);
	
	\node[smallvx, label=left:$a$] (a) at (-2*\width,-\side) {};
	\node[smallvx, label=right:$b$] (b) at (2*\width,-\side) {};
	
	\draw[hedge, out=90,in=90] (-\halfwidth, \side+\height) to (a);
	\draw[hedge, out=270,in=270] (-\halfwidth, -3*\side-\height) to (a);
	
	\draw[hedge, out=90,in=90] (\halfwidth, \side+\height) to (b);
	\draw[hedge, out=270,in=270] (\halfwidth, -3*\side-\height) to (b);
\end{scope}

\end{tikzpicture}
\caption{Left: A $B_6$ as we would like to embed it in a condensed wall, with positions of $a$ and $b$. Right: An isomorphic graph that reflects our acutal embedding of $B_6$ in $W^-$ as in Figure~\ref{fig:6BrickWallInCondWall}.}
\label{fig:6BrickWall}
\end{figure}

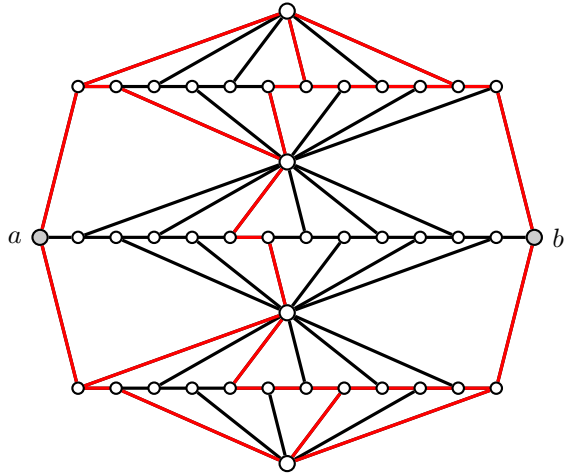
\begin{figure}[hbt] 
\centering
\begin{tikzpicture}[scale=1]
\tikzstyle{tinyvx}=[thick,circle,inner sep=0.cm, minimum size=1.5mm, fill=white, draw=black]

\def\vstep{2}
\def\hstep{0.5}
\def\hwidth{11}
\def\hheight{2}

\def\totalheight{\hheight*\vstep}
\def\totalwidth{\hwidth*\hstep}
\pgfmathtruncatemacro{\minustwo}{\hwidth-2}
\pgfmathtruncatemacro{\minusone}{\hwidth-1}

\foreach \j in {0,...,\hheight} {
	\draw[hedge] (0,-\j*\vstep) -- (\hwidth*\hstep,-\j*\vstep);
	\foreach \i in {0,...,\hwidth} {
		\node[tinyvx] (v\i\j) at (\i*\hstep,-\j*\vstep){};
	}
}
\pgfmathtruncatemacro{\plusvone}{\hheight+1}

\foreach \j in {0,...,\plusvone}{
	\node[hvertex] (z\j) at (0.5*\hwidth*\hstep,-\j*\vstep+0.5*\vstep) {};
}



\foreach \j in {1,...,\plusvone}{
	\pgfmathtruncatemacro{\subone}{\j-1}
	\foreach \i in {1,3,...,\hwidth}{
		\draw[hedge] (z\j) to (v\i\subone);
	}
}

\foreach \j in {0,...,\hheight}{
	\foreach \i in {0,2,...,\hwidth}{
		\draw[hedge] (z\j) to (v\i\j);
	}
}


\node[hvertex,fill=hellgrau,label=left:$a$] (a) at (-\hstep,-0.5*\totalheight) {};
\foreach \j in {0,...,\hheight} {
\draw[hedge] (a) to (v0\j);
}

\node[hvertex,fill=hellgrau,label=right:$b$] (b) at (\totalwidth+\hstep,-0.5*\totalheight) {};
\foreach \j in {0,...,\hheight} {
\draw[hedge] (v\hwidth\j) to (b);
}

\begin{pgfonlayer}{foreground}
	\draw[red, hedge] (z0) -- (v00) -- (v10) -- (z1) -- (v50) -- (v60) -- (z0);
	\draw[red, hedge] (z0) -- (v100) -- (v90) -- (v80) -- (v70) -- (v60);
	
	\draw[red, hedge] (z1) -- (v41) -- (v51) -- (z2);
	\draw[red, hedge] (v00) -- (a) -- (v02);
	\draw[red, hedge] (v100) -- (v110) -- (b) -- (v112);
	
	\draw[red, hedge] (z2) -- (v02) -- (v12) -- (z3) -- (v72) -- (v62) -- (v52) -- (v42) --(z2);
	\draw[red, hedge] (z3) -- (v112) -- (v102) -- (v92) -- (v82) -- (v72);

\end{pgfonlayer}

\end{tikzpicture}
\caption{A $B_6$ in three layers of a modified condensed wall $W^-$ (i.e. without jump-edges) of size~$6$. For more details on the embedding, see Figure~\ref{fig:6BrickWall}.}
\label{fig:6BrickWallInCondWall}
\end{figure}


\begin{lemma} \label{wall:lem:manyB7inCondensedWallPlusCDPath}
For every $n, r \in\N$, let $G$ be the graph consisting of a condensed wall without jump-edges $W^-$ of size $5 \cdot (n + r)$ and $n+r$ edge-disjoint paths $P$ (all internally disjoint from $W$) connecting $c$ with $d$. Then there are $n$~edge-disjoint $B_7$ in $G$, even with $r$ edges of $G$ being deleted.
\end{lemma}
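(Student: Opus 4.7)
The plan is to partition the $R := 5(n+r)$ layers of $W^-$ into $n+r$ \emph{chunks}, each consisting of $5$ consecutive layers, and to assign each chunk one of the given external $c$-$d$ paths. A chunk is called \emph{intact} if none of its $5$ layers has lost an edge and its assigned $c$-$d$ path has also survived the deletion. Since every deleted edge of $G$ lies in at most one chunk (either in one of its five layers or in its assigned $c$-$d$ path), at least $n$ chunks remain intact after the $r$ deletions, and in each of them I will embed one $B_7$.

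For a fixed intact chunk consisting of layers $W_{k+1},\ldots,W_{k+5}$ with associated external path $P$, the embedding of its $B_7$ has two parts. First, the middle three layers $W_{k+2}, W_{k+3}, W_{k+4}$, together with the edges from them to $a$ and $b$, host a $B_6$ built exactly as in Lemma~\ref{wall:lem:manyB6inCondensedWall} (see Figure~\ref{fig:6BrickWallInCondWall}). Second, the seventh brick is added as a cycle $C$ that shares one subdivided side with a chosen brick of this $B_6$: from two adjacent vertices $u$ and $v$ on the boundary of that brick, I route a ``leg'' starting at $u$ through the outer chunk layer $W_{k+1}$ and onwards through every preceding chunk's layers down to $c=z_0$, then follow $P$ from $c$ to $d=z_R$, and finally return through the following chunks' layers and through $W_{k+5}$ to $v$. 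Together with the $u$-$v$ side of the chosen $B_6$-brick, this closes $C$ into the seventh brick, yielding a subdivision of $B_7$.

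The main obstacle is edge-disjointness across all $n$ $B_7$'s. The $B_6$'s live in pairwise disjoint triples of layers, and the assigned paths $P$ are pairwise edge-disjoint by hypothesis, so the only possible collisions occur among the legs of the cycles~$C$, which all traverse the layers of every other chunk. This is handled by a lane-allocation argument: in every single layer $W_j$ there are $R$ edge-disjoint $z_{j-1}$-$z_j$ paths, and the local $B_6$ (if $W_j$ is one of the three middle layers of some chunk) consumes only a bounded number of them. Since $R=5(n+r)$, at least $n+r$ free lanes remain in each layer; allotting one distinct such lane per chunk in each layer produces pairwise edge-disjoint legs, and combined with the edge-disjoint $B_6$'s and edge-disjoint external paths this gives the required $n$ edge-disjoint $B_7$'s in $G$.
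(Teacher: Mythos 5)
Your overall skeleton --- cutting $W^-$ into $n+r$ chunks of five consecutive layers, discarding the at most $r$ chunks touched by deletions, and completing each surviving partial embedding through the remaining layers to $c$, along an external path $P$, and back from $d$, with a lane-count to keep the transit routes edge-disjoint --- is the same as the paper's. The gap is in how you build the individual $B_7$. Recall that $B_7$ is defined as a subdivision of the specific elementary graph of Figure~\ref{fig:B_n_definition}: a central brick surrounded by six petals. The seventh brick therefore shares a (subdivided) side with \emph{three} bricks of the $B_6$ --- the centre and the two petals flanking the gap --- and contributes only a path between the two degree-two vertices at the ends of that gap. A cycle $C$ that ``shares one subdivided side with a chosen brick of this $B_6$'' produces a brick hanging off the $B_6$ by a single edge, which is a different $7$-brick graph and not a $B_7$ in the paper's sense.

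Even after correcting the attachment points, the routing fails for the embedding of Lemma~\ref{wall:lem:manyB6inCondensedWall}: any path leaving the three middle layers must pass through one of the two bounding bottleneck vertices or through $a$ or $b$, and in the embedding of Figure~\ref{fig:6BrickWallInCondWall} both bounding bottleneck vertices already carry degree~$3$ in the $B_6$, while $a$ and $b$ sit in the interiors of subdivided edges that are not the free ends of the gap. Sending two further legs out of those layers thus creates a vertex of degree at least~$4$, or a branch vertex where $B_7$ has none, so the union is not a subdivision of $B_7$. This is precisely why the paper does not extend the $B_6$ embedding but instead exhibits, in Figures~\ref{fig:7BrickWallEmbedding} and~\ref{fig:7BrickWallInCondWall}, a direct embedding into all five layers of $B_7$ minus a single subdivided edge --- the edge of the flower joining $c$ to $d$ there, shared by the centre brick and one petal --- whose two endpoints are the extreme bottleneck vertices of the chunk; only that one missing edge is then realized by the route through $c$, $P$ and $d$. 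A further, smaller omission: your lane count discounts the lanes used by local $B_6$'s but not the up to $r$ lanes per layer destroyed by deletions in \emph{transit} layers (your ``intact'' condition only protects a chunk's own layers); the numbers still work out since $5(n+r)-r-(n+r)>0$, but this needs to be said.
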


\begin{proof}
Even with $r$ edges of $W^-$ being deleted, there are still at least $n$ untouched edge-disjoint chunks of $W^-$ that each contain five consecutive layers of $W^-$ and which all have an edge to both $a$ and $b$. In each of them, we can find a $B_7^-$ (that only misses one $z_i$-$z_{j+1}$-path) as in Figures~\ref{fig:7BrickWallEmbedding} and~\ref{fig:7BrickWallInCondWall}. 
Clearly, we can find edge-disjoint paths from each chunk to $c$ and $d$ in $W- \{a, b\}$, which together with a path $P$ outside of $W$ yields the desired $z_i$-$z_{j+1}$-path. Together, they form $n$ edge-disjoint $B_7$.
\end{proof}

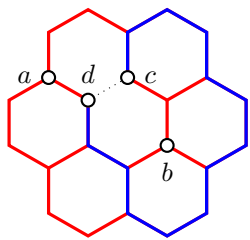
\begin{figure}[hbt] 
\centering
\begin{tikzpicture}[scale=0.6]
\tikzstyle{tinyvx}=[thick,circle,inner sep=0.cm, minimum size=1.5mm, fill=white, draw=black]

\def\side{1}
\def\height{0.5} 
\def\width{1.732} 
\def\halfwidth{0.5*\width}
\def\heightplusside{\height+\side}

\draw[red, hedge] (0,0) -- (0,\side) (\halfwidth,\heightplusside) -- (\width,\side) -- (\width,0) -- (\halfwidth,-\height) -- (0,0);
\draw[red, hedge] (0,\side) -- (-\halfwidth,\heightplusside) -- (-\width,\side) -- (-\width,0) -- (-\halfwidth,-\height) -- (0,0);
\draw[red, hedge] (-\halfwidth,\heightplusside) -- (-\halfwidth,\heightplusside+\side) -- (0,2*\height+2*\side) -- (\halfwidth,\heightplusside+\side) -- (\halfwidth,\heightplusside);
\draw[red, hedge] (-\halfwidth,-\height) -- (-\halfwidth,-\height-\side) -- (0,-2*\height-\side) -- (\halfwidth,-\height-\side) -- (\halfwidth,-\height);
\draw[red, hedge] (\halfwidth,-\height-\side) -- (\width,-2*\height-\side) -- (\width+\halfwidth,-\height-\side) -- (\width+\halfwidth,-\height) -- (\width,0);
\draw[red, hedge] (\width,\side) -- (\width+\halfwidth,\heightplusside) -- (\width+\halfwidth,\heightplusside+\side) -- (\width,2*\side+2*\height) -- (\halfwidth,\heightplusside+\side);
\draw[red, hedge] (\width+\halfwidth,-\height) -- (\width+2*\halfwidth,0) -- (\width+2*\halfwidth,\side) -- (\width+\halfwidth,\heightplusside);

\node[smallvx, label=above:$d$] (d) at (0,\side) {};
\node[smallvx, label=right:$c$] (c) at (\halfwidth,\heightplusside) {};
\node[smallvx, label=left:$a$] (a) at (-\halfwidth,\heightplusside) {};
\node[smallvx, label=below:$b$] (b) at (\width,0) {};

\draw[dotted] (c) -- (d);


\draw[blue, hedge] (c) -- (\halfwidth,\side+\heightplusside) -- (\width,2*\side+2*\height) -- (\width+\halfwidth,\side+\heightplusside) -- (\width+\halfwidth,\heightplusside) -- (2*\width,\side) -- (2*\width,0) -- (\width+\halfwidth,-\height) -- (\width+\halfwidth,-\height-\side) -- (\width,-2*\height-\side) -- (\halfwidth,-\height-\side) -- (\halfwidth,-\height) -- (0,0) -- (d) ;

\end{tikzpicture}
\caption{A $B_7$ as we embed it in Figure~\ref{fig:7BrickWallInCondWall}.}
\label{fig:7BrickWallEmbedding}
\end{figure}

\begin{figure}[hbt] 
\centering
\begin{tikzpicture}[scale=1]
\tikzstyle{tinyvx}=[thick,circle,inner sep=0.cm, minimum size=1.5mm, fill=white, draw=black]

\def\vstep{2}
\def\hstep{0.5}
\def\hwidth{11}
\def\hheight{4}

\def\totalheight{\hheight*\vstep}
\def\totalwidth{\hwidth*\hstep}
\pgfmathtruncatemacro{\minustwo}{\hwidth-2}
\pgfmathtruncatemacro{\minusone}{\hwidth-1}

\foreach \j in {0,...,\hheight} {
	\draw[hedge] (0,-\j*\vstep) -- (\hwidth*\hstep,-\j*\vstep);
	\foreach \i in {0,...,\hwidth} {
		\node[tinyvx] (v\i\j) at (\i*\hstep,-\j*\vstep){};
	}
}
\pgfmathtruncatemacro{\plusvone}{\hheight+1}

\foreach \j in {0,...,\plusvone}{
	\node[hvertex] (z\j) at (0.5*\hwidth*\hstep,-\j*\vstep+0.5*\vstep) {};
}



\foreach \j in {1,...,\plusvone}{
	\pgfmathtruncatemacro{\subone}{\j-1}
	\foreach \i in {1,3,...,\hwidth}{
		\draw[hedge] (z\j) to (v\i\subone);
	}
}

\foreach \j in {0,...,\hheight}{
	\foreach \i in {0,2,...,\hwidth}{
		\draw[hedge] (z\j) to (v\i\j);
	}
}


\node[hvertex,fill=hellgrau,label=left:$a$] (a) at (-\hstep,-0.5*\totalheight) {};
\foreach \j in {0,...,\hheight} {
\draw[hedge] (a) to (v0\j);
}

\node[hvertex,fill=hellgrau,label=right:$b$] (b) at (\totalwidth+\hstep,-0.5*\totalheight) {};
\foreach \j in {0,...,\hheight} {
\draw[hedge] (v\hwidth\j) to (b);
}

\begin{pgfonlayer}{foreground}
	\draw[blue, hedge] (z0) -- (v20) -- (v30) -- (v40) -- (v50) -- (z1) 
	 -- (v21) -- (v31) -- (v41) -- (v51) -- (z2)
	 -- (v42) -- (v52) -- (z3)
	 -- (v103) -- (v113) -- (z4)
	 -- (v44) -- (v54) -- (z5)  ; 
	
	\draw[red, hedge] (a) -- (v00) -- (v10) -- (v20);
	\draw[red, hedge] (z0) -- (v100) -- (v110) -- (b);
	\draw[red, hedge] (z1) -- (v110);
	
	\draw[red, hedge] (v51) -- (v61) -- (v71) -- (v81) -- (v91) -- (v101) -- (v111) -- (b);

	\draw[red, hedge] (a) -- (v03) -- (v13) -- (v23) -- (v33) -- (z4) ;
	\draw[red, hedge] (v33) -- (v43) -- (v53) -- (z3);
	\draw[red, hedge] (v113) -- (b);
	
	\draw[red, hedge] (a) -- (v04) -- (v14) -- (z5);

\end{pgfonlayer}

\end{tikzpicture}
\caption{A $B_7$ of which all but one edge fit into a condensed wall of size~$5$. A schematic drawing of what is embedded here can be found in Figure~\ref{fig:7BrickWallEmbedding}.}
\label{fig:7BrickWallInCondWall}
\end{figure}
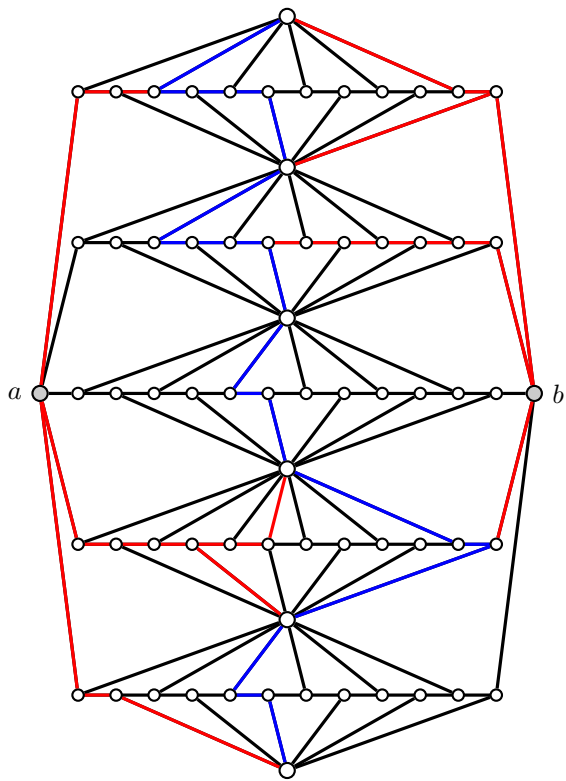

\begin{prop} \label{wall:prop:condWallB7noProofPossible}
A condensed wall $W$ (or its modified counterpart $W^-$) with some connections of its terminals cannot serve as a counterexample to prove that a $B_7$ does not have the edge-Erd\H{o}s-P\'{o}sa property.
\end{prop}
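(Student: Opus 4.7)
The plan is to show that for any graph $G = W \cup H$, where $W$ is a condensed wall (or $W^-$) and $H$ is an arbitrary graph attaching to $W$ only at the terminals $a$, $b$, $c$, $d$, either $G$ contains many edge-disjoint $B_7$-expansions or $G$ admits a small edge hitting set. This prevents any such $G$ from witnessing failure of the edge-Erd\H{o}s-P\'{o}sa property for $B_7$.

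The first step is to show that every $B_7$-expansion $U$ in $G$ must contain at least one $c$--$d$ path lying in $H$. I would decompose $U$ into its restriction to $W$ together with the ``external'' paths of $U$ in $H$; since $H$ meets $W$ only at terminals, these external paths have their endvertices in $\{a, b, c, d\}$. Collect the external paths meeting $\{a, b\}$ into a union $Q$ as in Lemma~\ref{wall:lem:noB7inCondensedWall}. That lemma forbids $U - Q$ from embedding in $W$, so some external path of $U$ must have both endvertices in $\{c, d\}$ and hence be a $c$--$d$ path through $H$.

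The next step is a Menger-style dichotomy on the $c$--$d$ edge-connectivity of $H$. Fix a candidate function $f$ supposedly bounding the hitting set size of a counterexample and select the size $r$ of $W$ accordingly. If $H$ has only few edge-disjoint $c$--$d$ paths, a small edge cut in $H$ destroys all of them, and by the first step this cut is an edge hitting set for every $B_7$-expansion in $G$. Otherwise $H$ provides many edge-disjoint $c$--$d$ paths, and Lemma~\ref{wall:lem:manyB7inCondensedWallPlusCDPath} directly produces many edge-disjoint $B_7$s in $G$, even tolerating the prescribed number of edge deletions. Either outcome contradicts the counterexample hypothesis.

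The main obstacle lies in tuning the quantitative parameters in the second step so that the bound from Lemma~\ref{wall:lem:manyB7inCondensedWallPlusCDPath} genuinely exceeds $f$ in the high-connectivity regime, while the Menger cut in the low-connectivity regime stays below $f$. A secondary subtlety is justifying the decomposition used in the first step: the claim that external paths of $U$ have endpoints in $\{a, b, c, d\}$ relies precisely on the hypothesis that $H$ is attached only along the terminals, so any relaxation of ``connections of terminals'' would require additional path-type bookkeeping but should yield to the same Menger argument.
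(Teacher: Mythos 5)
Your overall architecture is close to the paper's: the paper also argues that a $B_7$-expansion forces a $c$--$d$ connection outside $W$ and then invokes Lemma~\ref{wall:lem:manyB7inCondensedWallPlusCDPath}; your explicit Menger dichotomy on the $c$--$d$ edge-connectivity of $H$ is in fact a cleaner formalization of the quantitative step that the paper leaves informal. However, your first step has a genuine gap. You cannot collect the external part of $U$ into a set $Q$ satisfying the hypotheses of Lemma~\ref{wall:lem:noB7inCondensedWall} in all cases: that lemma requires $Q$ to be a disjoint union of paths whose \emph{interior} vertices have degree at most~$2$ in $B_7$, with endvertices among the bottleneck vertices and at least one of $a,b$. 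If $U_{out}$ attaches to $W$ at three terminals, say $a$, $b$ and $c$, then (since any non-terminal vertex of $H$ has all its $U$-edges outside $W$) $U_{out}$ necessarily contains a branch vertex of degree~$3$ of the $B_7$, so it is a subdivided star, not a union of such paths, and the lemma does not apply. In exactly this configuration $U$ uses no $c$--$d$ path through $H$, so your step-1 conclusion is not established and the Menger dichotomy never gets off the ground for this case. The paper closes it differently: it observes that such a $U_{out}$ meets at most four consecutive outer bricks of $B_7$, which forces a $B_2$ into $W-\{a,b\}$ with four disjoint connections to the rest of $B_7$, contradicting Lemma~\ref{wall:lem:atMost3ConnectionsForB2}. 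You need this (or an equivalent) additional argument.

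A secondary caveat: you let $H$ be an \emph{arbitrary} graph glued to $W$ along the terminals. Then $U$ could lie almost entirely inside $H$ (attaching to $W$ at only two terminals, or not at all), in which case no external $c$--$d$ path is forced and neither branch of your dichotomy applies; the paper implicitly restricts the ``connections'' so that $U_{out}$ consists of connection paths and at most isolated branch vertices. Either restrict $H$ accordingly or add an argument for the case that $U_{out}$ contains bricks of $B_7$. Also note that Lemma~\ref{wall:lem:manyB7inCondensedWallPlusCDPath} asks for $c$--$d$ paths internally disjoint from $W$, so the Menger paths you extract from $H$ must be rerouted or chosen to avoid $a$ and $b$.
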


\begin{proof}
Let $G$ be our counterexample graph, i. e. a condensed wall $W$ (or its modified counterpart $W^-$) with some connections of its terminals. Let $U$ be a $B_7$ in $G$. Let $U_{out}$ be the subgraph of $U$ in $G - (W - \{a, b\})$, while $U_{in}$ is the subgraph of $U$ in $G$.
In Lemma~\ref{wall:lem:noB7inCondensedWall}, we have seen that a $B_7$ cannot be found in any condensed wall $W$. As a $B_7$ is $2$-connected, we thus need at least two terminals of $W$ to be connected outside of $W$. However, Lemma~\ref{wall:lem:manyB7inCondensedWallPlusCDPath} shows that we must not connect $c$ with $d$, as we would otherwise find arbitrarily many $B_7$. This limits us to at most three connections, which in turn implies that $U_{out}$ is connected.

If $U_{out}$ has only $2$~connections to $W$, then $U_{out}$ only contains a single path with at least one of $a$ and $b$ as an endvertex. We can again conclude with Lemma~\ref{wall:lem:noB7inCondensedWall} that we cannot find a $B_7$ in $G$.
It remains to check what happens if $U_{out}$ uses three connections, namely $a, b$ and exactly one of $c$ and $d$. Then $U_{out}$ contains a single vertex $v$ of degree~$3$ in $B_7$ and possibly some paths incident with it. This implies that there is no vertex of degree~$3$ in $B_7$ between $a, b$ and $v$. Therefore, $U_{out}$ is only incident with at most $4$ of the outer bricks of $B_7$, and those are adjacent to each other. This implies that there is a $B_2$ in $W - \{a, b\}$, a contradiction to Lemma~\ref{wall:lem:atMost3ConnectionsForB2}.
\end{proof}

\subsection{Main Proof}

Let us now come back to proving our main lemma.

\begin{lemma*linkageWalls}
$U$ contains an ($a$--$b$, $c$--$d$) linkage in $W$.
\end{lemma*linkageWalls}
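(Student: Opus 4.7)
The plan is to split the embedding $U$ along the four-vertex cut $\{a,b,c,d\}$ between $W$ and the outside graph $G_{\textrm{out}} := G^* - (W - \{a,b,c,d\})$, and to force the $W$-portion of $U$ to be exactly an $(a$--$b, c$--$d)$-linkage. Write $U_{\textrm{in}} := U \cap W$ and $U_{\textrm{out}} := U \cap G_{\textrm{out}}$, so that $U = U_{\textrm{in}} \cup U_{\textrm{out}}$ and $V(U_{\textrm{in}}) \cap V(U_{\textrm{out}}) \subseteq \{a,b,c,d\}$; every subpath of $U$ that crosses from the interior of $W$ to the interior of $G_{\textrm{out}}$ must pass through one of the four terminals.

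First I would show that $U_{\textrm{in}}$ consists of at most two internally vertex-disjoint paths in $W$, each of them having both endpoints in $\{a,b,c,d\}$. Our choice of $e_1, e_2$ places them so that $B$ contains a $B_7$-subwall lying between them; if $U$ mapped too much of $B$ into $W - \{a,b\}$, then, combined with the few possible $W$-to-$G_{\textrm{out}}$ bridges through $c$ and $d$, one could extract a $B_7$-expansion in $W$ augmented by terminal-paths, violating Proposition~\ref{wall:prop:condWallB7noProofPossible}. In detail, Lemma~\ref{wall:lem:centralVertexIsBottleneck}, Lemma~\ref{wall:lem:noB4InSingleLayer}, Lemma~\ref{wall:lem:atMost3ConnectionsForB2} and Lemma~\ref{wall:lem:noB7inCondensedWall} together prohibit any branch vertex of $B$ from living in $W - \{a,b\}$, and they exclude a third $W$-to-$W$ path whose presence would produce a forbidden $B_2$- or $B_7$-expansion. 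Hence $U_{\textrm{in}}$ is at most two $W$-paths $P_1, P_2$ with endpoints in $\{a,b,c,d\}$. Moreover, since $G_{\textrm{out}}$ is an $r$-fold of $B - \{e_1, e_2\}$ whose only vertices of degree $\geq 3$ are the original $B$-vertices (and these serve as branch vertices for $U$), no path inside $G_{\textrm{out}}$ can realise $e_1$ or $e_2$ without crossing some other branch vertex; both $e_1$ and $e_2$ must therefore be realised through $W$, so in fact $P_1$ and $P_2$ exist.

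Second I would determine the endpoint pairing of $P_1$ and $P_2$. There are three possibilities: the straight pairing $\{\{a,b\},\{c,d\}\}$ and the two twisted pairings $\{\{a,c\},\{b,d\}\}$ and $\{\{a,d\},\{b,c\}\}$; only the straight one yields the desired $(a$--$b, c$--$d)$-linkage. In either twisted case, combining $P_1, P_2$ with the $B$-edges realised by $U_{\textrm{out}}$ incident to the endpoints of $e_1$ and $e_2$ on the outer face of $B$ produces two vertex-disjoint $G_{\textrm{out}}$-paths linking $a$ to $c$ and $b$ to $d$, contradicting the construction condition that every $ac$-path in $G_{\textrm{out}}$ disconnects $b$ from $d$. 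Hence the straight pairing holds and $P_1 \cup P_2$ is the required linkage.

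The principal obstacle is the first step: tightening $U_{\textrm{in}}$ from ``some subgraph of $W$'' down to ``exactly two terminal-to-terminal paths'' demands a delicate case analysis weaving together the small-wall non-embedding lemmas, keeping track both of which bottleneck vertices of $W$ might be forced to host branch vertices of $B$ and of how many $W$-edges may appear in a single layer before spawning a forbidden $B_2$, $B_4$ or $B_7$. Once $U_{\textrm{in}}$ is pinned down this tightly, the endpoint-pairing contradiction is a clean topological consequence of the asymmetric $a$-$c$/$b$-$d$ separation baked into $G^*$.
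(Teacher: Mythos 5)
Your decomposition into $U_{\textrm{in}}$ and $U_{\textrm{out}}$ along the terminals is exactly the paper's, and your endgame (ruling out the twisted pairings via the $a$-$c$/$b$-$d$ separation built into $G^*$) matches the conclusion of the paper's Case~2. But the first step, which you yourself flag as the principal obstacle, contains a genuine gap: it is not true that Lemmas~\ref{wall:lem:centralVertexIsBottleneck}, \ref{wall:lem:noB4InSingleLayer}, \ref{wall:lem:atMost3ConnectionsForB2} and \ref{wall:lem:noB7inCondensedWall} ``prohibit any branch vertex of $B$ from living in $W - \{a,b\}$.'' Those lemmas only exclude comparatively large configurations (a $B_4$ in $W-\{a,b\}$, a $B_2$ with four disjoint connections, a $B_7$ minus terminal paths); a single degree-$3$ branch vertex, a tree with two branch vertices, or even a whole brick or $B_3$ of $U$ can perfectly well sit inside $W$. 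Consequently $U_{\textrm{in}}$ cannot be tightened to ``at most two terminal-to-terminal paths,'' and the paper does not attempt this: it instead splits into the case where almost all of the body of $U$ lies in $W$ (killed by a $B_{10}$/grand-cycle argument producing a $B_1^2$ in a single layer with six disjoint connections), the case where $U_{\textrm{in}}$ is a tree or contains a brick (where a cycle $C$ through all bricks of $U_{\textrm{in}}$ and the cyclic order of the four attachment vertices $v_a,v_b,v_c,v_d$ is analysed), and only then extracts the linkage. Your proposal silently discards all of these cases.

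A second, related gap is your claim that ``both $e_1$ and $e_2$ must be realised through $W$'' because the degree-$\ge 3$ vertices of $G_{\textrm{out}}$ ``serve as branch vertices for $U$.'' An arbitrary $B$-expansion in $G^*$ need not place its branch vertices on the original vertices of $B$, so you cannot speak of ``realising $e_1$.'' The paper replaces this with a brick-counting argument: the body of $G^* - (W - \{a,b,c,d\})$ has two fewer bricks than the body of $B$, a cycle using exactly one new $\{a,b\}$--$\{c,d\}$ path is incident with at least seven bricks and hence cannot be a brick, and a cycle using both new paths would have a disconnected neighbourhood; only then does it follow that the two missing bricks must be repaired by a (disjoint) $a$-$b$ path and $c$-$d$ path in $W$. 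Without some substitute for this counting argument and for the case analysis above, your proof does not go through.
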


\begin{proof}
$U$ can not be entirely contained in $W$ due to Lemma~\ref{wall:lem:noB7inCondensedWall}. Cleary, it can not be entirely contained in $G^* - (W - \{a, b, c, d\})$ (the graph which contains all edges of $G^*$ that are not in $W$) either, as there are two edges missing there. Therefore, $U$ must have edges in both $W$ and $G^* - (W - \{a, b, c, d\})$. This motivates the following definitions:

Let $U_{in}$ be the subgraph of $U$ in $W$, and let $U_{out}$ be the subgraph of $U$ in $G^* - (W - \{a, b, c, d\})$. Note that $U_{in}$ and $U_{out}$ are edge-disjoint, $U_{in} + U_{out} = U$ and the only vertices that can be shared by $U_{in}$ and $U_{out}$ are $a, b, c$ and $d$. Therefore, we define a \emph{connecting vertex} as a vertex of $U$ that is incident with edges of both $U_{in}$ and $U_{out}$.

\vspace{1ex}
\noindent\textbf{Case~1}: Every component of $U_{in}$ contains at most three connecting vertices.
\vspace{1ex}

As $U$ is $2$-connected, every component of $U_{in}$ must contain at least two connecting vertices. Of the body of $U$, one of $U_{in}$ or $U_{out}$ contains all but at most one vertex of degree~$3$ (the \emph{large component}), while the other may only contain either one or two subgraphs of maximum degree~$2$ (i.e. paths) or a single subgraph with one single vertex of degree~$3$ and possibly some incident paths.

Suppose the large component is in $W$. Since $B$ is a wall of size at least $6 \times 4$, the body of $B$ contains a $B_{10}$. (Actually, it is considerably larger, but a $B_{10}$ will suffice to obtain a contradiction in this case.)
If $U_{out}$ only contains paths, we know by Lemma~\ref{wall:lem:noB7inCondensedWall} that if every one of those paths has $a$ or $b$ as one endvertex, we would not be able to embed a $B_7$ in $W \cup U_{out}$, lest there cannot be a $B_{10}$ or the entire body of $U$ in $G^*$. Therefore, we conclude that $U_{out}$ must either contain a vertex of degree~$3$ or a $cd$-path (and possibly also an $ab$-path).

Next, we note that the eight outer bricks of the $B_{10}$ 
are arranged in a cycle, which we call \emph{grand cycle}. This grand cycle contains two disjoint cycles that are incident with all eight bricks of the grand cycle, which we call \emph{large cycles}. $U_{in}$ still contains most of this grand cycle, except for one or two small subgraphs as described above.
Each vertex can be part of at most two bricks in the grand cycle. Additionally, it can be part of at most one of the large cycles. 

If there is a vertex $v$ of degree~$3$ or a $cd$-path $P$ in $U_{out}$, $W - \{a, b\}$ still contains at least all but three vertices of degree~$3$ in the grand cycle and all the paths between them that are not incident with a missing vertex. This implies $W - \{a, b\}$ contains at least two bricks. If those are adjacent, there is a $B_2$ in $W - \{a, b\}$, a contradiction to Lemma~\ref{wall:lem:atMost3ConnectionsForB2}. If every two bricks in $W - \{a, b\}$ are not adjacent, we have a look at how many bricks in $W - \{a, b\}$ are left. If there are exactly two, each of $a, b$ and $v$ (or $P$) must be incident with two bricks, and those bricks are pairwise different for each vertex. We conclude that we can still find a cycle in $W - \{a, b\}$ that is incident with all bricks of the grand cycle. Together with the two bricks in $W - \{a, b\}$, this yields a $B_1^2$ in $W - \{a, b\}$. As a $B_1^2$ is $2$-connected, it is contained in a single layer of $W$. However, it has two disjoint connections to each of $a, b$ and $v$ (or $P$) in $U$, which means there must be six pairwise disjoint paths connecting $B_1^2$ with vertices outside of its layer, which is clearly not possible.

What happens if there are more than two bricks left in $W - \{a, b\}$? Then, as no two of them are adjacent, there are exactly three bricks in $W - \{a, b\}$ and exactly one of $a, b$ and $v$ (or $P$) lies between every two of them. Clearly, we can again find a $B_1^2$ in $W - \{a, b\}$ and arrive at a contradiction as we did above.


It remains to check what happens if the large component $C$ is contained in $U_{out}$. We claim that if there is an embedding of $C$ in $G^* - (W - \{a, b, c, d\})$, then there is also an embedding of $C$ in the body of $G^* - (W - \{a, b, c, d\})$. We observe that no brick of $C$ can be entirely contained in an arm of $G^* - (W - \{a, b, c, d\})$ by definition of the body. If there is a brick $B_1$ of $C$ that is only partly contained in an arm of $G^* - (W - \{a, b, c, d\})$, then the other part of $B_1$ is contained in the body of $G^* - (W - \{a, b, c, d\})$. In particular, there is a path $P$ (the border between body and arm) such that $B_1 \cup P$ is a $B_2$. But of this $B_2$ exactly one brick is entirely contained in the arm while the other is entirely contained in the body of $G^* - (W - \{a, b, c, d\})$. Replacing $B_1$ with the latter and doing the same for all bricks of $C$ that are not entirely contained in $G^* - (W - \{a, b, c, d\})$ yields an embedding of $C$ in $G^* - (W - \{a, b, c, d\})$.

The body of $G^* - (W - \{a, b, c, d\})$ is missing two edges ($e_1$ and $e_2$), so the body of $U$ cannot be entirely contained there. To see that connecting $a$ or $b$ with $c$ or $d$ would not help to embed the body of $U$, we count bricks. $G^*$ misses $e_1, e_2$, so it contains two bricks less than $B$. A cycle that uses exactly one new $\{a,b\}-\{c,d\}$~path would be incident with at least seven bricks, meaning the cycle can not constitute a new brick since every brick can only be incident with at most 6 other bricks. Now let $K$ be a cycle that uses both new $\{a,b\}-\{c,d\}$~paths and is incident with at most $6$~bricks. Then it is incident with all three bricks that are adjacent to the brick containing $e_1$, and also incident with the three bricks adjacent to the brick containing $e_2$. But those two groups of three bricks each are not adjacent to each other, which is impossible for the neighbourhood of a brick. We conclude that $K$ may not constitute a new brick, either. We conclude that connecting $a$ or $b$ with $c$ or $d$ does not allow for an embedding the body of $U$.

Therefore, the only possibility is to add an $a$-$b$~path and a $c$-$d$~path in $W$. Those paths must be disjoint, as otherwise the two bricks repaired by using them would be adjacent, which is impossible since their neighbourhood is not. Thus $U$ contains an $a-b,c-d$-linkage, which was what we wanted.

\vspace{1ex}
\noindent\textbf{Case~2}: There are exactly four connecting vertices in one component of $U_{in}$.
\vspace{1ex}

By definition, there can be at most four connecting vertices in total, so this case covers everything not handled in Case~1. In particular, both $U_{in}$ and $U_{out}$ are connected. 

If $U_{in}$ does not contain a single brick, it contains no cycle and is therefore a tree with exactly four leaves and exactly two vertices of degree~$3$. If $U_{in}$ contains an $a-b$,$c-d$-linkage, there is nothing left to show. Otherwise, it looks as the red part in Figure~\ref{wall:fig:treeInUin}.

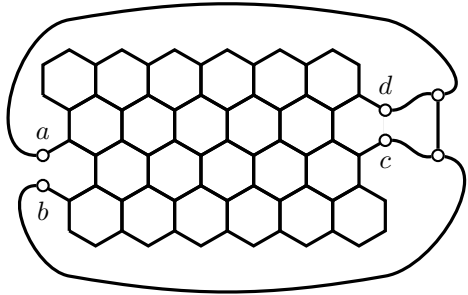
\begin{figure}[hbt]  \label{wall:fig:treeInUin}
\centering
\begin{tikzpicture}[scale=0.4]
\tikzstyle{tinyvx}=[thick,circle,inner sep=0.cm, minimum size=1.5mm, fill=white, draw=black]

\def\hbricks{6}

\def\side{1}
\def\height{0.5} 
\def\width{1.732} 
\def\halfwidth{0.5*\width}
\def\heightplusside{\height+\side}

\foreach \i in {1, ..., \hbricks} {
	\foreach \j in {0, 3} {
		\draw[hedge] (\i*\width,\j) -- (\i*\width,\side + \j) -- (\halfwidth + \i*\width,\heightplusside + \j) -- (\width + \i*\width,\side + \j) -- (\width + \i*\width,\j) -- (\halfwidth + \i*\width,-\height + \j) -- (\i*\width,\j);
	}
}

\begin{scope}[shift={(-\halfwidth,1.5)}]
\foreach \i in {1, ..., \hbricks} {
	\foreach \j in {0, 3} {
		\draw[hedge] (\i*\width,\j) -- (\i*\width,\side + \j) -- (\halfwidth + \i*\width,\heightplusside + \j) -- (\width + \i*\width,\side + \j) -- (\width + \i*\width,\j) -- (\halfwidth + \i*\width,-\height + \j) -- (\i*\width,\j);
	}
}
\end{scope}

\node[tinyvx,label=above:$a$] (a) at (\halfwidth,2.5) {};
\node[tinyvx,label=below:$b$] (b) at (\halfwidth,1.5) {};

\node[tinyvx,label=above:$d$] (d) at (7*\width, 4) {};
\node[tinyvx,label=below:$c$] (c) at (7*\width,3) {};

\draw[ultra thick, white] (a) -- (b);
\draw[ultra thick, white] (c) -- (d);

\node[tinyvx, draw=bold] (v1) at (8*\width, 4.5){};
\node[tinyvx, draw=bold] (v2) at (8*\width, 2.5){};

\draw[rededge] (a) to [out=180,in=190] (\width, 7) to [out=10,in=170] (7*\width, 7) to [out=350,in=10] (v1) to [out=170,in=0] (d);
\draw[rededge] (b) to [out=180,in=170] (\width,-1.5) to [out=350,in=190] (7*\width,-1.5) to [out=10,in=350] (v2) to [out=190,in=0] (c);
\draw[rededge] (v1) -- (v2);
\end{tikzpicture}
\caption{$G^* - W^0 + U_{in}$ when $U_{in}$ contains neither a brick nor a linkage.}
\end{figure}

To show that $G^* - W^0 + U_{in}$ does not contain an embedding of $B$ in this case (see Figure~\ref{wall:fig:treeInUin}), we count the bricks in the body of $B$ again. Since $G^*$ is missing $e_1$ and $e_2$, its body is missing two bricks. We have seen before that no cycle that contains exactly one $\{a,b\}$-$\{c,d\}$~path may constitute a new brick since it it incident with at least seven other bricks.

$G^* - W^0 + U_{in}$ may contain exactly one new brick that contains an $a$-$b$~path or a $c$-$d$-path in $U_{in}$. However, if there were two such bricks, they were adjacent to each other, which is impossible since their other neighbours are those bricks that were adjadent to the bricks containing $e_1$ and $e_2$, respectively. This would imply that each of the two new bricks' neighbourhood would be disconnected, which is impossible. We conclude that the body of $G^* - W^0 + U_{in}$ contains one brick less than the body of $B$, a contradiction.


As $U_{in}$ contains at least one brick, there is a cycle $C$ in $U_{in}$ that contains all bricks of $U_{in}$ on its inner face. In particular, the four paths connecting $a, b, c$ and $d$ all end in $C$ in pairwise different vertices $v_a, v_b, v_c$ and $v_d$. If there is a $v_a v_b$-path in $C$ disjoint from $v_c$ and $v_d$, then there is also a (disjoint) $v_c v_d$-path in $C$ disjoint from $v_a$ and $v_b$. Together, they yield an $a-b$,$c-d$-linkage in $W$. We may therefore assume that every $v_a v_b$-path on $C$ is incident with $v_c$ or $v_d$. This implies that out of the four paths connecting $U_{in}$ and $U_{out}$, the two containing $a$ and $b$ do not lie next to each other, but instead alternate with the ones containing $c$ and $d$.

Now we can study $U_{out}$. As $U_{in}$ cannot contain a $B_7$, there are some bricks left in $U_{out}$ and four paths connecting it to $U_{in}$. The border to the outer face of the bricks in $U_{out}$ forms a cycle $C$ that is incident with all paths that connect $U_{in}$ with $U_{out}$. Therefore, $C$ must be incident with $a, b, c$ and $d$ in $U_{out}$. We have seen that the connecting paths containing $a$ and $b$ alternate with those containing $c$ and $d$. Therefore, there must be an $ac$-path and a disjoint $bd$ path in $U_{out}$. This is impossible by construction.
\end{proof}

\section{Discussion}

In Theorem~\ref{wall:thm:WallsHaveNotEdgeErdosPosaIf6x4Contained}, we have seen that the expansions of every wall of size at least $6 \times 4$ do not have the edge-Erd\H{o}s-P\'{o}sa property. Is $6 \times 4$ optimal?

When studying the proof of Lemma~\ref{wall:lem:linkageMainLemma}, one will notice that a size of $6 \times 4$ was only needed to get an easier argument for why adding an $\{a,b\}$-$\{c,d\}$~path (that is, a path connecting vertices that are far away from each other) does not help to create new bricks. However, I believe that with a more careful argument (for example counting the neighbours of every brick), the same result can be obtained for smaller walls such as a $B_{10}$. In particular, you may have noticed that those parts of the proof of Lemma~\ref{wall:lem:linkageMainLemma} that referred to embedding a large part of a wall into the Heinlein Wall already made do with a $B_{10}$. Can we do even better?

In Proposition~\ref{wall:prop:condWallB7noProofPossible}, we showed that a condensed wall cannot be used to prove that a $B_7$ does not have the edge-Erd\H{o}s-P\'{o}sa property, even though it is not a minor of it as seen in Lemma~\ref{wall:lem:noB7inCondensedWall}. What about a $B_8$ or $B_9$? I believe that a Heinlein Wall cannot be used to prove that they do not have the edge-Erd\H{o}s-P\'{o}sa property, either. I did not prove that, but I found evidence in the form of constructions that place most bricks of a $B_8$ or $B_9$ in a (modified) Heinlein Wall $W^-$ while needing only one or two bricks outside of it with three or four connections to $W^-$.

\begin{lemma} \label{wall:lem:manyB8inCondensedWallPlusB1}
For every $n, r \in\N$, let $G$ be the graph consisting of a Heinlein Wall without jump-edges $W^-$ of size $3 \cdot (n + r)$ and a $n+r$-fold brick $B_1$ with three adjacent branch vertices being connected to $d, b$ and $c$ (in this order when counting clockwise) by $n+r$-fold paths (all internally disjoint from $W$, $B_1$ and each other). Then there are $n$~edge-disjoint $B_8$ in $G$, even with $r$ edges of $G$ being deleted.
\end{lemma}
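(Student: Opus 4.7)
The plan follows the pattern established in Lemmas~\ref{wall:lem:manyB6inCondensedWall} and~\ref{wall:lem:manyB7inCondensedWallPlusCDPath}. I partition the layers of $W^-$ into $n+r$ blocks of three consecutive layers each, and pair each block with one of the $n+r$ edge-disjoint copies of the external brick $B_1$ together with its three connection paths to $d$, $b$, and $c$. Since each of the $r$ deleted edges sits in at most one block or at most one gadget copy, at least $n$ blocks survive with their gadgets intact. Inside a surviving block I then produce one $B_8$, and edge-disjointness across blocks is automatic from the partition.

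For a single surviving block I would first embed the six-brick template of Figure~\ref{fig:6BrickWallInCondWall} into the three layers of the block, obtaining six bricks using only the layer edges and the $a$- and $b$-pendants of the block; the external brick $B_1$ then contributes a seventh brick. For the eighth brick, I would use the three external connection paths attached to the three consecutive vertices of $B_1$ together with short routes inside $W^-$ that reach $c$ and $d$ through the bottleneck vertices in the layers outside the block, chosen edge-disjointly from the block's edges and from the analogous routes reserved for the other surviving blocks, exactly as in the proof of Lemma~\ref{wall:lem:manyB7inCondensedWallPlusCDPath}. The three external connections partition the region around the external $B_1$ into faces; routing the $W^-$-internal paths so that one of these faces closes up into a $6$-cycle adjacent to the correct template brick yields the eighth brick.

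The main obstacle is the combinatorial check that the two new bricks (the external $B_1$ and the face stitched together from the external connections and the $W^-$-internal $c$- and $d$-paths) attach to the six-brick template in precisely the adjacency pattern of $B_8$ shown in Figure~\ref{fig:B_n_definition}, rather than in the pattern of $B_7$ with a parallel cycle or some other $8$-brick configuration. One must verify that the prescribed clockwise order $d, b, c$ around $B_1$ is compatible with the side of the block from which the $a$- and $b$-pendants emerge, and that the eighth brick ends up incident with exactly one of the two template bricks touching $b$. Once the local picture is drawn explicitly, in the spirit of Figure~\ref{fig:7BrickWallInCondWall}, edge-disjointness across the $n$ surviving blocks follows from the chunk partition together with the routing of the $c$- and $d$-paths through the bottleneck vertices of the unused layers.
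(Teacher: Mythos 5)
Your high-level strategy is the same as the paper's: partition $W^-$ into $n+r$ chunks of three consecutive layers, note that $r$ deletions spoil at most $r$ chunk/gadget pairs, embed most of each $B_8$ inside a surviving chunk, and complete it with the external brick and edge-disjoint routes to $c$ and $d$ through the unused layers. However, your brick accounting for the single-chunk construction is off, and this is not a cosmetic issue. If you place the full six-brick template of Figure~\ref{fig:6BrickWallInCondWall} inside the chunk and then attach the external hexagon by three internally disjoint legs to three points on the outer boundary, the three legs cut the annular region between the hexagon and the template into three parts, exactly one of which becomes the new outer face; hence you create \emph{three} new bounded faces (the hexagon plus two stitched faces), not two. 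For the result to be a subdivision of the elementary $B_8$, every bounded face must be a brick, so you would end up with $6+3=9$ bricks --- a nine-brick configuration, not a $B_8$. Your ``$6+1+1$'' count implicitly assumes that only one of the two stitched regions becomes a bounded face, which cannot happen in a planar embedding.

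The paper avoids this by embedding only \emph{five} bricks of the $B_8$ inside the chunk (a $B_8^-$ missing one brick and three connecting paths, Figure~\ref{fig:8BrickWallEmbedding}); the external hexagon and its three legs to $d$, $b$, $c$ then supply the remaining three bricks ($1$ hexagon $+$ $2$ stitched faces), as shown explicitly in Figure~\ref{fig:8BrickWallInCondWall}. Separately, you label the verification that the resulting adjacency pattern is that of $B_8$ (rather than some other eight-brick shape) as ``the main obstacle'' and leave it open; in a lemma of this kind that verification \emph{is} the proof, and the paper discharges it precisely by exhibiting the two figures. So you should (i) correct the internal template from six bricks to five, and (ii) actually draw or describe the embedding concretely enough to check the $B_8$ adjacencies and the clockwise order $d,b,c$ of the attachment points.
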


\begin{proof}
Even with $r$ edges of $G$ being deleted, there are still at least $n$ untouched edge-disjoint chunks of $W^-$ that each contain three consecutive layers of $W^-$ and which all have an edge to both $a$ and $b$. In each of them, we can find a $B_8^-$ (that only misses one brick and three connecting paths) as in Figures~\ref{fig:8BrickWallEmbedding} and~\ref{fig:8BrickWallInCondWall}.
Clearly, we can find edge-disjoint paths from each chunk to $c$ and $d$ in $W^- - \{a, b\}$, which together with a brick and three connections outside of $W^-$ yield the desired $B_8$. Together, they form $n$ edge-disjoint $B_8$.
\end{proof}

\begin{figure}[hbt] 
\centering
\begin{tikzpicture}[scale=0.6]
\tikzstyle{tinyvx}=[thick,circle,inner sep=0.cm, minimum size=1.5mm, fill=white, draw=black]

\def\side{1}
\def\height{0.5} 
\def\width{1.732} 
\def\halfwidth{0.5*\width}
\def\heightplusside{\height+\side}

\draw[red, hedge] (0,0) -- (0,\side) -- (\halfwidth,\heightplusside) -- (\width,\side) -- (\width,0) -- (\halfwidth,-\height) -- (0,0);
\draw[red, hedge] (0,\side) -- (-\halfwidth,\heightplusside) -- (-\width,\side) -- (-\width,0) -- (-\halfwidth,-\height) -- (0,0);
\draw[red, hedge] (-\halfwidth,\heightplusside) -- (-\halfwidth,\heightplusside+\side) -- (0,2*\height+2*\side) -- (\halfwidth,\heightplusside+\side) -- (\halfwidth,\heightplusside);
\draw[red, hedge] (-\halfwidth,-\height) -- (-\halfwidth,-\height-\side) -- (0,-2*\height-\side) -- (\halfwidth,-\height-\side) -- (\halfwidth,-\height);
\draw[red, hedge] (\halfwidth,-\height-\side) -- (\width,-2*\height-\side) -- (\width+\halfwidth,-\height-\side) -- (\width+\halfwidth,-\height) -- (\width,0);
\draw[dotted] (\width,\side) -- (\width+\halfwidth,\heightplusside) -- (\width+\halfwidth,\heightplusside+\side) -- (\width,2*\side+2*\height) -- (\halfwidth,\heightplusside+\side);
\draw[dotted] (\width+\halfwidth,-\height) -- (\width+2*\halfwidth,0) -- (\width+2*\halfwidth,\side) -- (\width+\halfwidth,\heightplusside);
\draw[dotted] (2*\width,\side) -- (2*\width+\halfwidth,\heightplusside) -- (2*\width+\halfwidth,\heightplusside+\side) -- (2*\width,2*\side+2*\height) -- (\width+\halfwidth,\heightplusside+\side);

\node[smallvx, label=above:$c$] (c) at (\halfwidth,2*\side+\height) {};
\node[smallvx, label=right:$b$] (b) at (\width,\side) {};
\node[smallvx, label=right:$d$] (d) at (\width+\halfwidth,-\height) {};
\node[smallvx, label=left:$a$] (a) at (-\halfwidth,-\height) {};


\draw[blue, hedge] (c) -- (\halfwidth,\heightplusside) -- (0,\side) -- (0,0) -- (\halfwidth,-\height) -- (\width,0) -- (d) ;

\end{tikzpicture}
\caption{A $B_8$ as we embed it in Figure~\ref{fig:8BrickWallInCondWall}.}
\label{fig:8BrickWallEmbedding}
\end{figure}
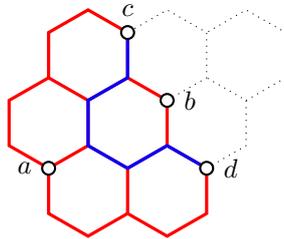

\begin{figure}[hbt] 
\centering
\begin{tikzpicture}[scale=1]
\tikzstyle{tinyvx}=[thick,circle,inner sep=0.cm, minimum size=1.5mm, fill=white, draw=black]

\def\hoffset{8}
\def\voffset{-2}

\def\vstep{2}
\def\hstep{0.5}
\def\hwidth{11}
\def\hheight{2}

\def\totalheight{\hheight*\vstep}
\def\totalwidth{\hwidth*\hstep}
\pgfmathtruncatemacro{\minustwo}{\hwidth-2}
\pgfmathtruncatemacro{\minusone}{\hwidth-1}

\foreach \j in {0,...,\hheight} {
	\draw[hedge] (0,-\j*\vstep) -- (\hwidth*\hstep,-\j*\vstep);
	\foreach \i in {0,...,\hwidth} {
		\node[tinyvx] (v\i\j) at (\i*\hstep,-\j*\vstep){};
	}
}
\pgfmathtruncatemacro{\plusvone}{\hheight+1}

\foreach \j in {0,...,\plusvone}{
	\node[hvertex] (z\j) at (0.5*\hwidth*\hstep,-\j*\vstep+0.5*\vstep) {};
}



\foreach \j in {1,...,\plusvone}{
	\pgfmathtruncatemacro{\subone}{\j-1}
	\foreach \i in {1,3,...,\hwidth}{
		\draw[hedge] (z\j) to (v\i\subone);
	}
}

\foreach \j in {0,...,\hheight}{
	\foreach \i in {0,2,...,\hwidth}{
		\draw[hedge] (z\j) to (v\i\j);
	}
}


\node[hvertex,fill=hellgrau,label=left:$a$] (a) at (-\hstep,-0.5*\totalheight) {};
\foreach \j in {0,...,\hheight} {
\draw[hedge] (a) to (v0\j);
}

\node[hvertex,fill=hellgrau,label=right:$b$] (b) at (\totalwidth+\hstep,-0.5*\totalheight) {};
\foreach \j in {0,...,\hheight} {
\draw[hedge] (v\hwidth\j) to (b);
}

\begin{pgfonlayer}{foreground}
	\draw[blue, hedge] (z0) -- (v100) -- (v110) -- (z1) 
	 -- (v21) -- (v31) -- (z2)
	 -- (v102) -- (v112) -- (z3); 
	
	\draw[red, hedge] (a) -- (v00) -- (v10) -- (v20) -- (v30);
	\draw[red, hedge] (z0) -- (v60) -- (v50) -- (v40) -- (v30);
	\draw[red, hedge] (z1) -- (v30);
	
	\draw[red, hedge] (v110) -- (b) -- (v112);

	\draw[red, hedge] (a) -- (v02) -- (v12) -- (v22);
	\draw[red, hedge] (v22) -- (z2);
	\draw[red, hedge] (v22) -- (v32) -- (v42) -- (v52) -- (z3);
	
\end{pgfonlayer}


\def\side{1}
\def\height{0.5} 
\def\width{1.732} 
\def\halfwidth{0.5*\width}
\def\heightplusside{\height+\side}

\begin{scope}[shift={(\hoffset,\voffset)}]
	\draw[dotted] (0,0) -- (0,\side) -- (\halfwidth,\heightplusside) -- (\width,\side) -- (\width,0) -- (\halfwidth,-\height) -- (0,0);
	
	\node (x) at (0,\side) {};
	\node (y) at (0,0) {};
	\node (z) at (\halfwidth,-\height) {};
\end{scope}


\draw[dotted] (z0) -- (\totalwidth,1) -- (x);
\draw[dotted] (b) -- (y);
\draw[dotted] (z3) -- (\totalwidth,-\totalheight-1) -- (z);

\end{tikzpicture}
\caption{A $B_8$ embedded in three layers of a Heinlein Wall. A schematic drawing of what is embedded here can be found in Figure~\ref{fig:8BrickWallEmbedding}.}
\label{fig:8BrickWallInCondWall}
\end{figure}

\begin{lemma} \label{wall:lem:manyB9inCondensedWallPlusB2}
For every $n, r \in\N$, let $G$ be the graph consisting of a Heinlein Wall without jump-edges $W^-$ of size $3 \cdot (n + r)$ and a $n+r$-fold $B_2$ with four branch vertices being connected to $a, b, c$ and $d$ by $n-r$-fold paths (all internally disjoint from $W$, $B_2$ and each other and such that a $B_9$ can be formed, see Figure~\ref{fig:9BrickWallInCondWall}). Then there are $n$~edge-disjoint $B_9$ in $G$, even with $r$ edges of $G$ being deleted.
\end{lemma}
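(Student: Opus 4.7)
The approach is a direct analogue of Lemma~\ref{wall:lem:manyB8inCondensedWallPlusB1}, with the outer $B_1$ replaced by the outer $B_2$: we show that a single three-layer chunk of $W^-$ already supplies seven of the nine bricks of a $B_9$, and the remaining two bricks plus four connecting paths are supplied by the outer gadget. First I would partition $W^-$ into $n+r$ edge-disjoint chunks, each consisting of three consecutive layers of $W^-$ together with their edges to $a$ and $b$. Since only $r$ edges of $G$ are deleted in total, and since the outer $B_2$ together with each of its four sets of connecting paths to $a,b,c,d$ comes in $(n+r)$-fold parallel form, at least $n$ of these chunks together with matching parallel copies of the outer structure remain entirely untouched.

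In each intact chunk I would embed a $B_9^{-}$, a fragment of $B_9$ missing exactly two bricks and the four paths that anchor them, entirely analogous to the $B_8^{-}$ of Figure~\ref{fig:8BrickWallInCondWall} and to the schematic in Figure~\ref{fig:9BrickWallInCondWall}. Seven full bricks of the target $B_9$ are realised in the three layers of the chunk using the same zig-zag through the bottleneck vertices $z_i$ as in the $B_8$ embedding. This leaves four loose ends: two are attached at $a$ and $b$ directly via the edges of the chunk, and the remaining two still have to be routed to $c$ and $d$ inside $W^- - \{a,b\}$.

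The completion is then routine. As in the proof of Lemma~\ref{wall:lem:manyB7inCondensedWallPlusCDPath}, for each chunk two edge-disjoint paths in $W^- - \{a,b\}$ join the two loose ends to $c$ and $d$; this is possible because in a condensed wall without jump-edges the bottleneck vertices $z_i$ of layers outside the chunk still provide plenty of edge-disjoint room to reach $c = z_0$ and $d = z_r$, and distinct chunks can pick edge-disjoint such routes. Glueing the outer $B_2$ with its four $(n+r)$-fold connections to $a,b,c,d$ on top then completes a $B_9$. Over the $n$ intact chunks and $n$ matching parallel copies of the outer gadget, this yields $n$ edge-disjoint $B_9$-subgraphs in $G$.

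The main obstacle, as in Lemmas~\ref{wall:lem:manyB6inCondensedWall},~\ref{wall:lem:manyB7inCondensedWallPlusCDPath} and~\ref{wall:lem:manyB8inCondensedWallPlusB1}, is purely combinatorial book-keeping of the edge budget: one must check that the two $c,d$-routes leaving different chunks do not collide, that the $(n+r)$-fold parallelism in the outer $B_2$ and in each of the four connecting path bundles is enough to absorb the $r$ deletions while still providing $n$ fully edge-disjoint attachments, and that the chosen $B_9^{-}$ in each chunk really is a subdivision of the prescribed $B_9$ once the outer $B_2$ is added. These verifications are essentially identical to the corresponding checks in the $B_8$ case and carry no new ideas.
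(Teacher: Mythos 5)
Your proposal matches the paper's own proof essentially verbatim: partition $W^-$ into $n+r$ three-layer chunks, observe that at least $n$ survive the $r$ deletions together with matching parallel copies of the outer gadget, embed in each a $B_9^-$ missing two bricks and four connecting paths (as in Figures~\ref{fig:9BrickWallEmbedding} and~\ref{fig:9BrickWallInCondWall}), route two loose ends to $c$ and $d$ inside $W^- - \{a,b\}$, and attach the outer $B_2$. The level of detail and the appeal to the analogous $B_8$ construction of Lemma~\ref{wall:lem:manyB8inCondensedWallPlusB1} are the same as in the paper, so nothing further is needed.
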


\begin{proof}
Even with $r$ edges of $G$ being deleted, there are still at least $n$ untouched edge-disjoint chunks of $W^-$ that each contain three consecutive layers of $W^-$ and which all have an edge to both $a$ and $b$. In each of them, we can find a $B_9^-$ (that only misses two bricks and four connecting paths) as in Figures~\ref{fig:9BrickWallEmbedding} and~\ref{fig:9BrickWallInCondWall}.
Clearly, we can find edge-disjoint paths from each chunk to $c$ and $d$ in $W^- - \{a, b\}$, which together with two bricks and four connections outside of $W^-$ yield the desired $B_9$. Together, they form $n$ edge-disjoint $B_9$.
\end{proof}

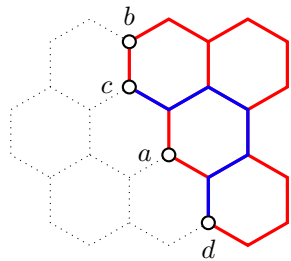
\begin{figure}[hbt] 
\centering
\begin{tikzpicture}[scale=0.6]
\tikzstyle{tinyvx}=[thick,circle,inner sep=0.cm, minimum size=1.5mm, fill=white, draw=black]

\def\side{1}
\def\height{0.5} 
\def\width{1.732} 
\def\halfwidth{0.5*\width}
\def\heightplusside{\height+\side}

\draw[dotted] (0,0) -- (0,\side) -- (\halfwidth,\heightplusside);
\draw[red, hedge] (\halfwidth,\heightplusside) -- (\width,\side) -- (\width,0);
\draw[dotted] (\width,0) -- (\halfwidth,-\height) -- (0,0);

\draw[dotted] (0,\side) -- (-\halfwidth,\heightplusside) -- (-\width,\side) -- (-\width,0) -- (-\halfwidth,-\height) -- (0,0);

\draw[dotted] (-\halfwidth,\heightplusside) -- (-\halfwidth,\heightplusside+\side) -- (0,2*\height+2*\side) -- (\halfwidth,\heightplusside+\side);
\draw[red, hedge] (\halfwidth,\heightplusside+\side) -- (\halfwidth,\heightplusside);

\draw[dotted] (-\halfwidth,-\height) -- (-\halfwidth,-\height-\side) -- (0,-2*\height-\side) -- (\halfwidth,-\height-\side) -- (\halfwidth,-\height);

\draw[dotted] (\halfwidth,-\height-\side) -- (\width,-2*\height-\side) -- (\width+\halfwidth,-\height-\side);
\draw[red, hedge] (\width+\halfwidth,-\height-\side) -- (\width+\halfwidth,-\height) -- (\width,0);

\draw[red, hedge] (\width,\side) -- (\width+\halfwidth,\heightplusside) -- (\width+\halfwidth,\heightplusside+\side) -- (\width,2*\side+2*\height) -- (\halfwidth,\heightplusside+\side);
\draw[red, hedge] (\width+\halfwidth,-\height) -- (\width+2*\halfwidth,0) -- (\width+2*\halfwidth,\side) -- (\width+\halfwidth,\heightplusside);
\draw[red, hedge] (\width+\halfwidth,-\height-\side) -- (2*\width,-2*\height-\side) -- (2*\width+\halfwidth,-\height-\side) -- (2*\width+\halfwidth,-\height) -- (2*\width,0);
\draw[red, hedge] (2*\width,\side) -- (2*\width+\halfwidth,\heightplusside) -- (2*\width+\halfwidth,\heightplusside+\side) -- (2*\width,2*\side+2*\height) -- (\width+\halfwidth,\heightplusside+\side);

\node[smallvx, label=left:$c$] (c) at (\halfwidth,\heightplusside) {};
\node[smallvx, label=above:$b$] (b) at (\halfwidth,\heightplusside+\side) {};
\node[smallvx, label=below:$d$] (d) at (\width+\halfwidth,-\height-\side) {};
\node[smallvx, label=left:$a$] (a) at (\width,0) {};


\draw[blue, hedge] (c) -- (\width,\side) -- (\width+\halfwidth,\heightplusside) -- (2*\width,\side) -- (2*\width,0) -- (\width+\halfwidth,-\height) -- (d) ;

\end{tikzpicture}
\caption{A $B_9$ as we embed it in Figure~\ref{fig:9BrickWallInCondWall}.}
\label{fig:9BrickWallEmbedding}
\end{figure}

\begin{figure}[hbt] 
\centering
\begin{tikzpicture}[scale=1]
\tikzstyle{tinyvx}=[thick,circle,inner sep=0.cm, minimum size=1.5mm, fill=white, draw=black]

\def\hoffset{-4}
\def\voffset{-2}

\def\vstep{2}
\def\hstep{0.5}
\def\hwidth{11}
\def\hheight{2}

\def\totalheight{\hheight*\vstep}
\def\totalwidth{\hwidth*\hstep}
\pgfmathtruncatemacro{\minustwo}{\hwidth-2}
\pgfmathtruncatemacro{\minusone}{\hwidth-1}

\foreach \j in {0,...,\hheight} {
	\draw[hedge] (0,-\j*\vstep) -- (\hwidth*\hstep,-\j*\vstep);
	\foreach \i in {0,...,\hwidth} {
		\node[tinyvx] (v\i\j) at (\i*\hstep,-\j*\vstep){};
	}
}
\pgfmathtruncatemacro{\plusvone}{\hheight+1}

\foreach \j in {0,...,\plusvone}{
	\node[hvertex] (z\j) at (0.5*\hwidth*\hstep,-\j*\vstep+0.5*\vstep) {};
}



\foreach \j in {1,...,\plusvone}{
	\pgfmathtruncatemacro{\subone}{\j-1}
	\foreach \i in {1,3,...,\hwidth}{
		\draw[hedge] (z\j) to (v\i\subone);
	}
}

\foreach \j in {0,...,\hheight}{
	\foreach \i in {0,2,...,\hwidth}{
		\draw[hedge] (z\j) to (v\i\j);
	}
}


\node[hvertex,fill=hellgrau,label=left:$a$] (a) at (-\hstep,-0.5*\totalheight) {};
\foreach \j in {0,...,\hheight} {
\draw[hedge] (a) to (v0\j);
}

\node[hvertex,fill=hellgrau,label=right:$b$] (b) at (\totalwidth+\hstep,-0.5*\totalheight) {};
\foreach \j in {0,...,\hheight} {
\draw[hedge] (v\hwidth\j) to (b);
}

\begin{pgfonlayer}{foreground}
	\draw[blue, hedge] (z0) -- (v00) -- (v10) -- (z1) 
	 -- (v61) -- (v71) -- (z2)
	 -- (v02) -- (v12) -- (z3); 
	
	\draw[red, hedge] (a) -- (v00);
	\draw[red, hedge] (a) -- (v02);
	
	\draw[red, hedge] (z0) -- (v100) -- (v110) -- (b);
	
	\draw[red, hedge] (z1) -- (v101) -- (v111) -- (b);
	\draw[red, hedge] (v71) -- (v81) -- (v91) -- (v101);
	
	\draw[red, hedge] (z2) -- (v42) -- (v52) -- (z3);
	
\end{pgfonlayer}


\def\side{1}
\def\height{0.5} 
\def\width{1.732} 
\def\halfwidth{0.5*\width}
\def\heightplusside{\height+\side}

\begin{scope}[shift={(\hoffset,\voffset)}]
	\draw[dotted] (0,0) -- (0,\side) -- (-\halfwidth,\heightplusside) -- (-\width,\side) -- (-\width,0) -- (-\halfwidth,-\height) -- (0,0);
	\draw[dotted] (-\halfwidth,-\height) -- (-\halfwidth,-\height-\side) -- (0,-\side-2*\height) -- (\halfwidth,-\height-\side) -- (\halfwidth,-\height) -- (0,0);
	
	\node (a1) at (\halfwidth,-\height) {};
	\node (b1) at (-\halfwidth,\heightplusside) {};
	\node (c1) at (0,\side) {};
	\node (d1) at (\halfwidth,-\height-\side) {};
\end{scope}


\draw[dotted] (b) -- (\totalwidth+1,2) -- (-0.5*\totalwidth,2) -- (b1);
\draw[dotted] (z0) -- (-0.5*\totalwidth,1) -- (c1);
\draw[dotted] (a) -- (a1);
\draw[dotted] (z3) -- (-0.5*\totalwidth,-\totalheight-1) -- (d1);

\end{tikzpicture}
\caption{A $B_9$ embedded in three layers of a condensed wall. A schematic drawing of what is embedded here can be found in Figure~\ref{fig:9BrickWallEmbedding}.}
\label{fig:9BrickWallInCondWall}
\end{figure}

\bibliographystyle{amsplain}

\bibliography{chapters/literature}{}

%
%
%
%
%

\end{document}